\documentclass[12pt]{smfart}
\usepackage{color}
\usepackage{amssymb,verbatim}
\usepackage{amsthm,array,amssymb,amscd,amsfonts,amssymb,latexsym, url}
\usepackage{amsmath}
\usepackage[all]{xy}
\usepackage[french]{babel}
\usepackage{times}

\newcounter{spec}
{\end{list}}

\renewcommand{\P}{{\mathbf P}}

\newcommand{\R}{{\mathbb R}}
\newcommand{\Z}{{\mathbb Z}}
\newcommand{\Q}{{\mathbb Q}}

\newcommand{\oi}{\hskip1mm {\buildrel \simeq \over \rightarrow} \hskip1mm}

\newcommand{\Br}{{\operatorname{Br}}}

\newcommand{\Hom}{{\operatorname{Hom}}}
\newcommand{\Spec}{{\operatorname{Spec \ }}}

\renewcommand{\lim}{\varprojlim}

\renewcommand{\phi}{\varphi}

\numberwithin{equation}{section}

\newfont{\gothic}{eufb10}

\newtheorem{theo}{Th\'{e}or\`{e}me}[section]
\newtheorem{prop}[theo]{Proposition}

\newtheorem{lem}[theo]{Lemme}
\newtheorem{cor}[theo]{Corollaire}
\theoremstyle{definition}
\newtheorem{defi}[theo]{D\'efinition}
\theoremstyle{remark}

\newtheorem{rem}[equation]{Remarque}

\newcommand{\bthe}{\begin{theo}}
\newcommand{\ble}{\begin{lem}}
\newcommand{\bpr}{\begin{prop}}
\newcommand{\bco}{\begin{cor}}
\newcommand{\bde}{\begin{defi}}
\newcommand{\ethe}{\end{theo}}
\newcommand{\ele}{\end{lem}}
\newcommand{\epr}{\end{prop}}
\newcommand{\eco}{\end{cor}}
\newcommand{\ede}{\end{defi}}

\newcommand{\Gal}{{\rm{Gal}}}
\newcommand{\et}{   {\operatorname{\acute{e}t}   }   }

\newcommand{\Pic}{\operatorname{Pic}}

%

\DeclareFontFamily{U}{wncy}{}
\DeclareFontShape{U}{wncy}{m}{n}{%
<5>wncyr5%
<6>wncyr6%
<7>wncyr7%
<8>wncyr8%
<9>wncyr9%
<10>wncyr10%
<11>wncyr10%
<12>wncyr6%
<14>wncyr7%
<17>wncyr8%
<20>wncyr10%
<25>wncyr10}{}
\DeclareMathAlphabet{\cyr}{U}{wncy}{m}{n}
\def\A{{\mathbb A}}
\def\pic{{\rm Pic\,}}
\def\G{{\mathbb G}}
\def\k{{\overline k}}
\def\X{{\mathcal X}}
\def \spec {{\rm Spec}\, }
\def \ov{\overline}
\def \RR{{\bf R}}
\def \cores{{\rm Cores \,}}
\def \Ga{{\Gamma}}
\DeclareMathOperator{\Ext}{Ext}
\def \calo{{\mathcal O}}
\def\U{{\mathcal U}}
\def\W{{\mathcal W}}

\begin{document}

\title[Approximation forte en famille]{Approximation forte en famille}
\author{Jean-Louis Colliot-Th\'el\`ene}
\address{C.N.R.S., Universit\'e Paris Sud\\Math\'ematiques, B\^atiment 425\\91405 Orsay Cedex\\France}
\email{jlct@math.u-psud.fr}
\author{David Harari}
\address{Universit\'e Paris Sud\\Math\'ematiques, B\^atiment 425\\91405 Orsay Cedex\\France}
\email{David.Harari@math.u-psud.fr}
\date{version r\'evis\'ee, 14 juillet 2013}
\maketitle

 \begin{abstract}
 Soient $k$ un corps de nombres et $X$ une $k$-vari\'et\'e affine lisse int\`egre 
  fibr\'ee au-dessus de la droite affine $\A^1_{k}$. Supposons que toutes les fibres sont g\'eom\'etriquement int\`egres, et que la fibre g\'en\'erique est un espace homog\`ene sous un
  groupe semisimple simplement connexe presque simple $G/k(t)$, les stabilisateurs g\'eom\'etriques \'etant r\'eductifs connexes.
  Soit $v$ une place de $k$ telle que la fibration $X \to \A^1_{k}$ admette une section rationnelle sur le compl\'et\'e
  $k_{v}$. Supposons en outre que pour presque tout point   $x \in \A^1(k_{v})$ le $k_{v}$-groupe $G_{x}$ soit isotrope.  Supposons enfin le groupe de Brauer de $X$ r\'eduit \`a celui de $k$. Alors l'approximation forte vaut pour $X$
  en dehors de la place $v$.
 \end{abstract}
 
 \begin{altabstract}
 Let $k$ be a number field and $X$ a smooth integral affine variety  equipped  with a surjective morphism  $f : X \to \A^1_{k}$  to the affine line. Assume that all fibres of $f$ are split, for instance that they are geometrically integral. Assume that the generic fibre of $f$ is a homogeneous space of a simply connected, almost simple, semisimple group $G/k(t)$, and that the geometric stabilizers are connected reductive groups. Let $v$ be a place of $k$ such that the fibration $f$ acquires a rational section over the completion $k_{v}$ at $v$. Assume moreover that at almost all points in $x \in \A^1(k_{v})$ the specialized group $G_{x}$ is isotropic over $k_{v}$. If the Brauer group of $X$ is reduced to the Brauer group of $k$, then strong approximation holds for $X$ away from the place $v$.
 \end{altabstract}

\section{Introduction}

On dit qu'une vari\'et\'e alg\'ebrique $X$ d\'efinie sur un corps de nombres $k$ et qui
poss\`ede des points dans tous les compl\'et\'es de $k$ satisfait l'approximation forte en dehors
d'un ensemble fini $S$ de places du corps $k$ si l'image diagonale de l'ensemble $X(k)$
des points rationnels de $X$ est dense dans l'espace $X(\A_{k}^S)$ des points ad\'eliques de $X$  hors de $S$.

Lorsqu'une telle propri\'et\'e vaut, elle implique pour tout mod\`ele de $X$ au-dessus
de l'anneau des $S$-entiers  de $k$ un principe local-global pour l'existence de points
$S$-entiers.

Pour $S$ non vide, l'approximation forte est une propri\'et\'e bien connue des espaces affines. 
Cette propri\'et\'e a \'et\'e \'etablie par M. Kneser et V. P. Platonov pour tout $k$-groupe
semisimple simplement connexe presque $k$-simple, sous l'hypoth\`ese que
le produit
des points locaux de $G$ aux places de 
$S$ 
est non compact.

Il a \'et\'e observ\'e que cette propri\'et\'e ne s'\'etend pas aux groupes non simplement connexes.
Cependant, une obstruction de type Brauer-Manin a \'et\'e d\'egag\'ee \cite{CTX} et il a \'et\'e
montr\'e  dans une s\'erie d'articles (voir  \cite{borodemarche}) que sous des hypoth\`eses tr\`es larges elle contr\^ole le d\'efaut d'approximation forte
pour les espaces homog\`enes de groupes lin\'eaires connexes.

Tout comme cela est entrepris pour le probl\`eme analogue pour les points rationnels, on souhaite \'elargir la
classe des vari\'et\'es pour lesquelles on a un tel contr\^ole sur les points entiers. L'obstruction de Brauer-Manin enti\`ere est en effet
 d\'efinie pour toute vari\'et\'e  \cite{CTX2}. Elle a \'et\'e calcul\'ee pour quelques vari\'et\'es qui ne sont pas
des espaces homog\`enes \cite{KT,CTW}, mais sans que l'on puisse dire s'il s'agit l\`a de la seule obstruction.

Dans \cite{CTX2},   Fei Xu et le premier auteur \'etudient  l'approximation forte pour
certains  mod\`eles lisses $X$ des vari\'et\'es d\'efinies sur un corps de nombres $k$ par une \'equation
$$q(x,y,z)=p(t),$$ o\`u $q(x,y,z)$ est une forme quadratique non d\'eg\'en\'er\'ee  
\`a coefficients dans un corps de nombres $k$ et $p(t) \in k[t]$ est un polyn\^ome non nul en une variable.
S'il existe une place $v_{0}$ telle que $q(x,y,z)$ est isotrope sur le compl\'et\'e $k_{v_{0}}$,
on a \'etabli que  l'image diagonale de $X(k)$ dans la projection de l'ensemble de Brauer--Manin 
$X(\A_k)^{\Br X}$
sur $X(\A_k^{v_0})$ (ad\`eles hors de $v_{0}$) est dense.

\medskip
Inspir\'es par cet exemple, 
  nous \'etablissons un r\'esultat g\'en\'eral pour
les familles \`a un param\`etre d'espaces homog\`enes. 
Nous montrons :

\medskip
Th\'eor\`eme A (voir le th\'eor\`eme \ref{thprincipal}).  {\it 
  Soit $X$ une $k$-vari\'et\'e lisse connexe 
munie d'un morphisme $f : X \to \A^1_{k} $ 
satisfaisant les conditions suivantes :

 (i) la fibre g\'en\'erique de $f$ est un espace homog\`ene d'un
$k(t)$-groupe semisimple simplement connexe presque simple $G_{t}$,
et les stabilisateurs g\'eom\'etriques pour cette action sont  r\'eductifs connexes;

(ii)  les fibres de $f$ sont scind\'ees, par exemple g\'eom\'etriquement int\`egres;

(iii)  il existe une place $v_{0}$ telle que la fibration $f$ ait une section rationnelle sur $k_{v_{0}}$,
 et que, pour presque tout $t_{0} \in k_{v_{0}}$ le groupe sp\'ecialis\'e $G_{t_{0}}$
est isotrope.

  (iv) Les \'el\'ements du groupe de Brauer $\Br X$
prennent une valeur constante lorsqu'on les \'evalue sur $X(k_{v_{0}})$.

Alors 
  l'image diagonale de $X(k)$ dans la projection de l'ensemble de Brauer--Manin 
$X(\A_k)^{\Br X}$ sur $X(\A_k^{v_0})$ est dense.}

\medskip

Sous les hypoth\`eses 
(i) et (ii)
du th\'eor\`eme, le quotient $\Br X/\Br k$ est fini.

\smallskip

Comme nous l'a fait remarquer Dasheng Wei, m\^eme pour $v_0$  r\'eelle,
on ne peut pas esp\'erer 
que la conclusion du th\'eor\`eme soit satisfaite 
si la projection sur $\A^1(k_{v_0})$ de $X(\A_k)^{\Br X}$
est compacte; il est donc clair qu'il faut une condition en $v_0$, condition 
qui est ici assur\'ee par la combinaison de nos hypoth\`eses (iii) et (iv).
Noter qu'en particulier l'hypoth\`ese (iv) est automatiquement
satisfaite dans plusieurs cas int\'eressants, voir la Remarque \ref{souventsatisfaite}. 
Les hypoth\`eses (iii) et (iv) sont toujours satisfaites si $v_0$ est complexe.

\smallskip

L'exemple suivant repr\'esente d\'ej\`a une vaste g\'en\'eralisation 
des principaux  r\'esultats de \cite{CTX2}. 
 
 \medskip

Th\'eor\`eme B (th\'eor\`eme \ref{generalCTX2}).  {\it Soient $a_{i}(t), i=1,2,3,$ et $p(t)$
des polyn\^{o}mes deux \`a deux premiers entre eux.
Soit $X\subset  \A^4_{k}$ l'ouvert de lissit\'e de la $k$-vari\'et\'e affine
$Y$ d'\'equation
$$\sum_{i=0}^2 a_{i}(t)x_{i}^2=p(t).$$
Soit $v_{0}$ une place de $k$.  On suppose que
la conique d'\'equation $\sum_{i=0}^2 a_{i}(t)x_{i}^2=0$ sur le corps $k(t)$ 
a un point rationnel sur le corps $k_{v_{0}}(t)$.

Si le produit $p(t).\prod_{i}a_{i}(t)$
est un polyn\^ome non constant s\'eparable, alors $X=Y$ et
 l'approximation forte vaut pour $X$ hors de $v_{0}$ : l'image diagonale de $X(k)$
est dense dans  $X(\A_k^{v_0})$ (ad\`eles hors de $v_{0}$).}

\smallskip

Quand $v_0$ est complexe, on a l'\'enonc\'e plus g\'en\'eral que
l'image diagonale de $X(k)$ dans la projection de l'ensemble de Brauer--Manin
$X(\A_k)^{\Br X}$
sur $X(\A_k^{v_0})$ (ad\`eles hors de $v_{0}$) est dense sans avoir besoin
de l'hypoth\`ese suppl\'ementaire sur le produit $p(t).\prod_{i}a_{i}(t)$.

\medskip

\`A titre d'illustration, voici un cas particulier.

\smallskip

Th\'eor\`eme C. {\it Soient $a_{i}(t), i=1,2,3,$ et $p(t)$ dans $\Z[t]$ des polyn\^{o}mes.
Supposons le produit $p(t).\prod_{i}a_{i}(t)$ non constant et sans facteur carr\'e dans $\Q[t]$.
Soit $\X/\Z$ le sch\'ema affine d\'efini dans $\A^4_{\Z}$ par
$$\sum_{i=0}^2 a_{i}(t)x_{i}^2=p(t).$$ 
Supposons que pour presque tout $t \in \R$
la conique $\sum_{i=0}^2 a_{i}(t)x_{i}^2=0$ a un point dans $\R$.
Alors le principe local-global et l'approximation forte valent pour les points entiers de  $\X$ :
L'image diagonale de $\X(\Z)$ est dense dans le produit $\prod_{p} \X(\Z_{p})$
des solutions locales enti\`eres sur tous les premiers $p$.}

\medskip
 
 Le lecteur peut se demander pourquoi dans les deux \'enonc\'es pr\'ec\'edents  on consid\`ere  une forme quadratique \`a trois variables
 dans le membre de gauche de l'\'equation.
 
  Lorsque l'on prend au moins 4 variables, sous l'hypoth\`ese
 que le produit $p(t).\prod_{i}a_{i}(t)$ est non constant et sans facteur carr\'e dans $\Q[t]$, une m\'ethode de
 fibration simple  \cite[\S 3]{CTX2} donne le r\'esultat ci-dessus. 
 
 Par contre, quand on consid\`ere le probl\`eme
 avec 2 variables, la question de l'existence
 et de la densit\'e des solutions enti\`eres d'une \'equation aussi simple que
 $$x^2+ay^2=p(t),$$
 avec $a \in k$ non carr\'e et  $p(t)$ polyn\^ome s\'eparable de degr\'e au moins 3, 
 est hors d'atteinte de toutes les techniques connues. 
 Il  y a deux difficult\'es essentielles : d'une part  pour toute $k$-fibre lisse $X_{t}$   
 le quotient $\Br \, X_{t}/\Br\,  k$ est infini, d'autre part les fibres  correspondant aux z\'eros du polyn\^ome $p(t)$   ne sont pas scind\'ees.

 \medskip
 
 Disons maintenant un mot sur les m\'ethodes employ\'ees dans l'article.
 
 Dans une s\'erie d'articles \cite{Hara,Hara2,Hara3}, le second auteur a \'etudi\'e le principe de Hasse et l'approximation faible
 en familles pour les points rationnels, en tenant compte des contraintes provenant du groupe de Brauer. Ces articles
 supposent la fibration $f : X \to \A^1_{k}$ propre.  Les d\'emonstrations comportent essentiellement deux aspects :
 un aspect alg\'ebrique,
 l'\'etude du comportement  du groupe de Brauer en famille et un aspect  arithm\'etique, le
  choix d'une  fibre dont l'ensemble de Brauer-Manin est non vide.
 
 \medskip 
 
Pour le probl\`eme que nous consid\'erons ici, la fibration n'est pas propre. 
La fibre g\'en\'erique est un espace homog\`ene d'un groupe semisimple simplement connexe.
La bonne connaissance que nous avons du groupe de Picard et du groupe de Brauer
de tels espaces nous permet de les \'etudier en famille, ce qui donne la partie alg\'ebrique
de la d\'emonstration (Th\'eor\`eme \ref{hilbertirr}).

Pour l'aspect arithm\'etique, dans le contexte d'approximation forte
du pr\'esent article, un probl\`eme nouveau se pr\'esente : on doit travailler avec une place exceptionnelle
$v_{0}$ qui est impos\'ee au d\'epart.
Pour le r\'esoudre,  on  d\'eveloppe {\it une variante nouvelle 
de la m\'ethode des fibrations} (m\'ethode qui avait \'et\'e 
employ\'ee dans \cite{Hara}, \cite{Hara2} et \cite{Hara3})~: 
le principal
ingr\'edient suppl\'ementaire est une version raffin\'ee du th\'eor\`eme 
d'approximation forte combin\'ee \`a un argument de type irr\'eductibilit\'e
de Hilbert (proposition~\ref{appforteHilbertraf}).
  
Ces techniques permettent une r\'eduction du probl\`eme d'approximation des points entiers sur  $X$
au cas des points entiers d'une fibre convenable du morphisme 
$f : X \to \A^1_{k} $, fibre qui est un espace homog\`ene 
d'un $k$-groupe semisimple simplement connexe, espace auquel on peut 
 appliquer les r\'esultats sur l'obstruction de Brauer-Manin enti\`ere
 \'etablis par Fei Xu et le premier auteur \cite{CTX}, puis g\'en\'eralis\'es
 par Borovoi et Demarche  \cite{borodemarche}.

\medskip
 
{\bf Conventions et notations}

Soit $k$ un corps. Une $k$-vari\'et\'e $X$ est par d\'efinition un $k$-sch\'ema s\'epar\'e de type fini.
On note $k[X]^{\times}$ le groupe des fonctions inversibles sur $X$.

Si la $k$-vari\'et\'e $X$ est int\`egre, on note $k(X)$ son corps des fonctions rationnelles.

Une $k$-vari\'et\'e est dite {\it scind\'ee} si elle contient 
un ouvert non vide qui comme $k$-vari\'et\'e 
est g\'eom\'etriquement int\`egre.

Soit $\k$ une cl\^oture s\'eparable de $k$. Pour toute $k$-vari\'et\'e $X$, 
on note
$\overline{X}$ la $\k$-vari\'et\'e $X \times_{k}\k$. 
De m\^eme si $S$ est un sch\'ema et 
$\X$ un $S$-sch\'ema, on note $\X_T:=\X \times_S T$ pour tout $S$-sch\'ema 
$T$.
 
  On note $\pic X$ le groupe de Picard d'un sch\'ema $X$ et 
$\Br X=H^2_{\et}(X,\G_{m})$ son groupe de Brauer. 
On d\'esigne aussi par $X^{(1)}$ l'ensemble 
des points de codimension $1$ de $X$.

 Soit $k$ un corps de nombres. Pour $v$ une place de $k$ on note $k_{v}$ le compl\'et\'e
 de $k$ en $v$, et pour $v$ non archim\'edienne,  on note $O_{v} \subset k_{v}$ l'anneau des entiers.
 Pour $S$ un ensemble fini de places de $k$, on note $\calo_S \subset k$ l'anneau des entiers hors de $S$.
 
  Pour la d\'efinition et les g\'en\'eralit\'es sur l'obstruction de Brauer--Manin enti\`ere,
  nous renvoyons le lecteur aux introductions de \cite{CTX} et \cite{CTX2}.
En particulier si $X$ est une $k$-vari\'et\'e, on note $X(\A_k)$ (resp. 
$X(\A_k ^{v_0})$) l'ensemble
de ses points ad\'eliques (resp. de ses points ad\'eliques hors de $v_0$); 
pour tout sous-ensemble $B$ de $\Br X$, on note
$X(\A_k)^B$ le sous-ensemble de $X(\A_k)$ constitu\'e des points ad\'eliques 
orthogonaux \`a $B$ pour l'accouplement de Brauer-Manin et on pose 
$X(\A_k)^{\Br}:=X(\A_k)^{\Br X}$.

\section{Sp\'ecialisation du groupe de Brauer}\label{picbraueresphomogene}

Le but de ce paragraphe est de d\'emontrer le th\'eor\`eme \ref{hilbertirr}, analogue dans
le pr\'esent cadre du th\'eor\`eme 2.3.1 de \cite{Hara2}, qui traitait de fibrations propres
sur la droite affine.
 Il   faut  adapter au pr\'esent contexte tous  les arguments du \S 2 de \cite{Hara2}.

\begin{prop}\label{specialisation}
  Soit $R$ un anneau de valuation discr\`ete de corps r\'esiduel $k$ de caract\'eristique z\'ero.
Soit $K$ le corps des fractions de $R$. Soit $\X$ un $R$-sch\'ema lisse \`a fibres g\'eom\'etriquement int\`egres.
Soit $p : \X \to \spec R$ le morphisme structural.
Soit $X/K$ la fibre g\'en\'erique et $Y/k$ la fibre sp\'eciale.
Supposons $H^1_{\et}({\overline Y},\Q/\Z)=0$, c'est-\`a-dire 
que la fibre sp\'eciale g\'eom\'etrique n'a pas de rev\^etement ab\'elien connexe non trivial.
Alors la fl\`eche de restriction $\Br \X \to \Br X $ induit un isomorphisme
$$ \Br \X/\Br R \oi \Br X /  \Br K$$
et une fl\`eche de sp\'ecialisation 
$$ \Br X / \Br K  \to \Br Y/\Br k.$$
\end{prop}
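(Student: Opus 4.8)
The plan is to compare two localization (purity) exact sequences linked by the pullback $p^*$, the hypothesis $H^1_{\et}(\overline{Y},\Q/\Z)=0$ serving only to pin down the target of the residues along the special fibre. Since $\X$ is smooth over the regular ring $R$, it is regular, and the special fibre $Y$ is a regular (indeed integral, the fibre being geometrically integral) divisor in $\X$ whose open complement is the generic fibre $X$. Purity for the Brauer group in the smooth case then gives an exact sequence
\[
0 \to \Br \X \to \Br X \xrightarrow{\partial_Y} H^1_{\et}(Y,\Q/\Z),
\]
the residue $\partial_Y$ landing in the \emph{unramified} $H^1$ of $Y$, which coincides with $H^1_{\et}(Y,\Q/\Z)$ because $Y$ is regular. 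For the discrete valuation ring $R$ one has the analogous sequence $0 \to \Br R \to \Br K \xrightarrow{\partial_R} H^1(k,\Q/\Z)$, and the two are compatible: writing $\rho : Y \to \spec k$ for the structural map and $\rho^*$ for the induced pullback, functoriality of residues (the uniformizer $\pi$ of $R$ being a local equation of $Y$ in $\X$) yields $\partial_Y \circ p^* = \rho^* \circ \partial_R$ on $\Br K$.

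I first identify the target of $\partial_Y$. As $Y$ is geometrically integral, $\overline{Y}$ is connected, so $H^0(\overline{Y},\Q/\Z)=\Q/\Z$; the beginning of the Hochschild--Serre exact sequence
\[
0 \to H^1(k,\Q/\Z) \to H^1_{\et}(Y,\Q/\Z) \to H^1_{\et}(\overline{Y},\Q/\Z)^{G_k}
\]
together with the hypothesis $H^1_{\et}(\overline{Y},\Q/\Z)=0$ shows that $\rho^*$ is an isomorphism $H^1(k,\Q/\Z) \oi H^1_{\et}(Y,\Q/\Z)$. For surjectivity of $\Br\X/\Br R \to \Br X/\Br K$, start from $\beta \in \Br X$: its residue $\partial_Y(\beta)$ is, via this isomorphism, of the form $\rho^*(\chi)$ for some $\chi \in H^1(k,\Q/\Z)$. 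The map $\partial_R$ is surjective (lift $\chi$ to an unramified character of $G_K$ and cup with $\pi$), so there is $a \in \Br K$ with $\partial_R(a)=\chi$; then $\partial_Y(p^* a)=\rho^*(\chi)=\partial_Y(\beta)$, whence $\beta - p^* a \in \Ker \partial_Y = \Br \X$, so $\beta$ lies modulo $\Br K$ in the image of $\Br \X$. For injectivity, note $\Br \X \to \Br X$ is injective ($\X$ being regular integral); if $\alpha \in \Br \X$ restricts to $p^* a$ on $X$ with $a \in \Br K$, then $0=\partial_Y(\alpha|_X)=\rho^*(\partial_R a)$, and $\rho^*$ being injective gives $\partial_R(a)=0$, i.e. $a \in \Br R$; by injectivity of restriction, $\alpha = p^* a \in p^*(\Br R)$. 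This gives the stated isomorphism.

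Finally, for the specialization map it suffices, using $\Br X/\Br K \cong \Br \X/\Br R$, to produce a map $\Br \X/\Br R \to \Br Y/\Br k$. Restriction along the closed immersion $Y \hookrightarrow \X$ gives $\Br \X \to \Br Y$, and the commutative square relating $(Y \hookrightarrow \X)$ and $(\spec k \hookrightarrow \spec R)$ shows that this restriction carries $p^*(\Br R)$ into the image of $\Br k$ in $\Br Y$. Passing to quotients yields $\Br \X/\Br R \to \Br Y/\Br k$, and composing with the inverse of the isomorphism above gives the desired specialization map $\Br X/\Br K \to \Br Y/\Br k$.

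The principal technical obstacle is the purity exact sequence itself, namely the fact that $\partial_Y$ takes values in the unramified group $H^1_{\et}(Y,\Q/\Z)$ rather than merely in $H^1(k(Y),\Q/\Z)$, together with the functoriality of residues under $p^*$. In the present smooth situation these are standard, but they carry the geometric content of the statement; everything else is formal once the hypothesis $H^1_{\et}(\overline{Y},\Q/\Z)=0$ has been translated, via Hochschild--Serre, into the fact that $\rho^*$ is an isomorphism.
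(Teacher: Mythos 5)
Your proof is correct and follows essentially the same route as the paper: the Auslander--Brumer sequence for $R$, the purity (Gabber) localization sequence for the pair $(Y,\X)$, and the identification $H^1(k,\Q/\Z)\oi H^1_{\et}(Y,\Q/\Z)$ coming from the hypothesis $H^1_{\et}(\overline{Y},\Q/\Z)=0$. The only difference is that you spell out the diagram chase and the construction of the specialization map, which the paper leaves implicit.
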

(On fait syst\'ematiquement l'abus de langage d'\'ecrire $A/B$ plut\^ot 
que $A/{\rm Im} \, (B)$.)
\medskip

\begin{proof}

On a le  diagramme commutatif de suites exactes
{\small
$$\begin{array}{ccccccccccc}

0 &\to&  \Br R &\to& \Br K &\to& H^1(k,\Q/\Z) &  \to & 0 \cr

&& \downarrow && \downarrow& &  \downarrow & &  \\
 
0 &\to&  \Br \X &\to& \Br X &\to& H^1_{\et}(Y,\Q/\Z) & & \cr
 \end{array}
$$}
La premi\`ere suite exacte est bien connue \cite{groth}, \`a la surjectivit\'e \`a droite pr\`es,
pour laquelle nous renvoyons \`a  \cite{AB} et \cite[Thm. B. 2.1]{CTHK}.
 
La deuxi\`eme suite exacte r\'esulte  
de la suite de localisation en cohomologie \'etale, 
de la suite exacte de Kummer, de l'injectivit\'e
du groupe de Brauer d'un sch\'ema r\'egulier int\`egre dans
le groupe de Brauer de son corps des fonctions, et
du th\'eor\`eme de puret\'e de Gabber (voir \cite{fujiwara}).
Sous les hypoth\`eses faites sur $Y$,  on a $H^{0}_{\et}(\ov{Y},\Q/\Z)=\Q/\Z$ et $H^{1}_{\et}(\ov{Y},\Q/\Z)=0$,
donc la fl\`eche naturelle
$$ H^1(k,\Q/\Z) \to  H^1_{\et}(Y,\Q/\Z)$$
est un isomorphisme. La proposition r\'esulte alors
du diagramme.
\end{proof}

\begin{prop}\label{suitespectrale}
Soit $R$ un anneau de valuation discr\`ete.
Soit $K$ le corps des fractions de $R$. Soit $\X$ un $R$-sch\'ema lisse \`a fibres g\'eom\'etriquement int\`egres.
Soit $p : \X \to \spec R$ le morphisme structural.
Soit $X=\X_{K}$ la fibre g\'en\'erique.

(1) La fl\`eche de restriction $\pic \X \to \pic X$ est un isomorphisme.

(2) Soit $\spec S \to \spec R$ un rev\^etement galoisien (\'eventuellement infini) de groupe 
de Galois~$G$.  Supposons que l'on a : 

(a) $S^{\times}  \oi H^0(\X_{S}, \G_{m})$;

(b) $\Br S=0$;

(c) $H^3(G,S^{\times})=0$.

Alors on a une suite exacte naturelle
$$ 0 \to H^1(G,\pic \X_{S}) \to \Br \X/ \Br R \to [\Br \X_{S}]^G \to H^2(G,\pic \X_{S}) .$$

\end{prop}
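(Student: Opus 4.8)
\emph{Partie (1).} Le sch\'ema $\X$ est int\`egre et r\'egulier, car lisse sur l'anneau de valuation discr\`ete $R$, et la fibre sp\'eciale $Y$, suppos\'ee g\'eom\'etriquement int\`egre, est un diviseur premier de $\X$. La suite exacte de localisation pour les groupes de classes de diviseurs s'\'ecrit
$$\Z\cdot[Y]\to\pic\X\to\pic X\to 0.$$
Or $Y$ est le diviseur de la fonction d\'eduite d'une uniformisante de $R$, donc $[Y]=0$ dans $\pic\X$~; la restriction $\pic\X\to\pic X$ est ainsi un isomorphisme.

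\emph{Partie (2).} Le plan est d'exploiter la suite spectrale de Hochschild--Serre
$$E_2^{p,q}=H^p(G,H^q_{\et}(\X_S,\G_m))\Rightarrow H^{p+q}_{\et}(\X,\G_m),$$
d'aboutissement $\pic\X$ en degr\'e $1$ et $\Br\X$ en degr\'e $2$, et de la comparer, via le morphisme structural $p$, avec la suite spectrale analogue pour le rev\^etement $\spec S\to\spec R$ (le cas $G$ profini se traite par passage \`a la limite sur les sous-rev\^etements finis). L'hypoth\`ese (a) identifie les lignes $q=0$ des deux suites au terme $H^p(G,S^\times)$.

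Pour la base, la suite exacte des termes de bas degr\'e jointe \`a $\pic R=0$ fournit d'abord $H^1(G,S^\times)=0$, puis, comme $\Br S=0$ et $H^3(G,S^\times)=0$, une suite exacte $0\to[\pic S]^G\to H^2(G,S^\times)\to\Br R\to H^1(G,\pic S)\to 0$~; en particulier l'image de la fl\`eche naturelle $H^2(G,S^\times)\to\Br R$ est un sous-groupe $F^2\Br R$ de conoyau $H^1(G,\pic S)$. Pour $\X$, en utilisant $H^1(G,S^\times)=0$ et $H^3(G,S^\times)=0$, la m\^eme suite exacte de bas degr\'e donne
$$0\to\pic\X\to[\pic\X_S]^G\to H^2(G,S^\times)\to F^1\Br\X\to H^1(G,\pic\X_S)\to 0,$$
o\`u $F^1\Br\X=\Ker(\Br\X\to[\Br\X_S]^G)$, tandis que la filtration en degr\'e $2$ identifie $\Br\X/F^1\Br\X$ au noyau $\Ker\big([\Br\X_S]^G\to H^2(G,\pic\X_S)\big)$. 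On note $F^2\Br\X$ l'image de $H^2(G,S^\times)\to\Br\X$~: c'est le noyau de $F^1\Br\X\to H^1(G,\pic\X_S)$.

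Il reste \`a calculer l'image de $\Br R$ dans $\Br\X$. Le compos\'e $\Br R\to\Br\X\to[\Br\X_S]^G$ se factorise par $\Br S=0$, donc cette image est contenue dans $F^1\Br\X$~; par fonctorialit\'e des fl\`eches d'ar\^ete, le morphisme $H^2(G,S^\times)\to\Br\X$ se factorise par $\Br R$, d'o\`u $F^2\Br\X\subseteq\operatorname{Im}(\Br R)$. \emph{Le point d\'elicat} est de v\'erifier que cette image n'exc\`ede pas $F^2\Br\X$~: une chasse au diagramme, la fl\`eche $F^2\Br R\to F^2\Br\X$ \'etant surjective, identifie le conoyau de $\operatorname{Im}(\Br R)\hookrightarrow F^1\Br\X$ \`a celui de $H^1(G,\pic S)\to H^1(G,\pic\X_S)$. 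Comme $S$ est semilocal sur $R$ (limite inductive filtrante de rev\^etements finis de l'anneau local $R$), on a $\pic S=0$, ce conoyau est donc $H^1(G,\pic\X_S)$ tout entier, et $\operatorname{Im}(\Br R)=F^2\Br\X$. En recollant l'isomorphisme $F^1\Br\X/\operatorname{Im}(\Br R)\iso H^1(G,\pic\X_S)$ avec l'identification de $\Br\X/F^1\Br\X$ obtenue ci-dessus, on aboutit \`a la suite exacte annonc\'ee
$$0\to H^1(G,\pic\X_S)\to\Br\X/\Br R\to[\Br\X_S]^G\to H^2(G,\pic\X_S).$$
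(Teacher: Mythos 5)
Your proof is correct and takes essentially the same route as the one the paper delegates to \cite[Prop. 2.1]{Hara2}: the exact sequence of low-degree terms of the Hochschild--Serre spectral sequence for the Galois covering $\X_S \to \X$, compared with the corresponding sequence for $\spec S \to \spec R$ (and, for (1), the standard divisor-class localization argument). You also rightly make explicit the auxiliary fact $\pic S = 0$ (as $S$ is a filtered limit of finite \'etale covers of the local ring $R$), which is genuinely needed for the injectivity of $H^1(G,\pic \X_S) \to \Br \X/\Br R$ and is left implicit in the paper's appeal to the reference.
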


\begin{proof}
L'\'enonc\'e (1) est classique. La suite exacte dans (2) provient de la suite spectrale
de Leray pour le morphisme $\X \to \spec R$ et le faisceau $\G_{m}$, la d\'emonstration
est essentiellement celle de  \cite[Prop. 2.1]{Hara2}.
\end{proof}

\begin{prop}\label{tresgranddiagramme}
Soit $R$ un anneau de valuation discr\`ete de corps r\'esiduel $k$  de caract\'eristique z\'ero.
Soit $K$ le corps des fractions de $R$.  Soit $R^h$ le hens\'elis\'e de $R$ et $R^{sh}$
le hens\'elis\'e strict.
Soit $K^h$, resp. $K^{sh}$  le corps des fractions de $R^h$, resp. $R^{sh}$. 
Fixons des inclusions $K \subset K^h \subset  K^{sh} \subset \ov{K}$.
Notons $\Ga = {\rm Gal}(\ov{k}/k)={\rm Gal}(K^{sh}/K^h)$ et $\Ga_{K}={\rm Gal}(\ov{K}/K)$.

Supposons $H^3(\Ga,\ov{k}^{\times})=0$, $H^3(\Ga,\Z)=0$ et 
$H^3(\Ga_{K},\ov{K}^{\times})=0$.

 Soit $\X$ un $R$-sch\'ema lisse \`a fibres g\'eom\'etriquement int\`egres.
Soit $p : \X \to \spec R$ le morphisme structural.
Soit $X=\X_{K}$ la fibre g\'en\'erique.

Supposons que l'on a $\G_{m,R} \oi p_{*}\G_{m,\X}$ universellement sur $\spec R$.

On a alors un diagramme commutatif de suites exactes

{\small
$$\begin{array}{ccccccccccc}
0 &\to&  H^1(K,\pic X_{\ov K}) &\to& \Br X/\Br K &\to&
[\Br X_{ \ov K}]^{\Ga_K} &\to&  H^2(K,\pic X_{\ov K}) \cr

&& \downarrow && \downarrow& &  \downarrow && \downarrow &  & \\
0 &\to&  H^1(K^h,\pic X_{\ov K}) &\to&  \Br X_{K^h}/\Br K^h &\to& 
[\Br X_{ \ov K}]^{\Ga_{K^h}} &\to&  H^2(K^h,\pic X_{\ov K}) \cr

&&\uparrow && || & &  \uparrow&&  \uparrow&  & \\
0 &\to&  H^1(\Ga,\pic X_{K^{sh} } ) &\to&  \Br X_{K^{h}}/\Br K^h &\to& 
[\Br X_{ K^{sh}}]^{\Ga}     &\to&  H^2(\Ga,\pic X_{K^{sh}}) \cr

&&||&& \uparrow{\rho}& &  \uparrow &&  ||&  & \\
0 &\to&  H^1(\Gamma,\pic \X_{R^{sh} } ) &\to&  \Br \X_{R^{h}}/\Br R^h &\to& 
[\Br \X_{ R^{sh}}]^{\Ga}     &\to&  H^2(\Ga,\pic \X_{R^{sh}}) \cr

&&\downarrow&& \downarrow& & \downarrow&&  \downarrow&  & \\
0 &\to&  H^1(\Ga,\pic \ov Y) &\to&  \Br Y/\Br k &\to&  
\Br \ov Y^{\Ga} &\to&  H^2(\Ga,\pic \ov Y).
\end{array}
$$}

Si de plus $H^1(\ov{Y},\Q/\Z)=0$, la fl\`eche $\rho$ dans ce diagramme
est un isomorphisme.    
\end{prop}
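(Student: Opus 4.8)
The plan is to realize each of the five rows as the low-degree exact sequence (the truncated seven-term sequence) of an appropriate Leray spectral sequence for $\G_m$, and then to read off the vertical maps together with the commutativity of every square from the functoriality of these spectral sequences. The hypothesis $\G_{m,R} \oi p_*\G_{m,\X}$ \emph{universally} is used throughout to identify, for each base $B$ among $\ov K$, $K^{sh}$, $R^{sh}$, $\ov k$, the global units $H^0$ of the corresponding pullback of $\X$ with $B^\times$; this trivialises the $E_2^{p,0}$-line of each spectral sequence to $H^p(\text{groupe}, B^\times)$. Concretely, row $1$ (resp.\ row $2$, resp.\ row $5$) is the Hochschild--Serre sequence for $X_{\ov K}/X$ over $K$ (resp.\ for $X_{\ov K}/X_{K^h}$ over $K^h$, resp.\ for $\ov Y/Y$ over $k$); row $3$ is the Hochschild--Serre sequence for the rev\^etement non ramifi\'e $X_{K^{sh}}/X_{K^h}$ de groupe $\Ga$; and row $4$ is precisely the exact sequence furnished by Proposition \ref{suitespectrale}(2), applied to la paire hens\'elienne $(R^h, R^{sh})$ avec $G=\Ga$.

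Next I would verify the input needed for exactness at the third term of each row, which in every case reduces to the vanishing of the relevant $E_2^{3,0}=H^3$. For row $5$ this is $H^3(\Ga,\ov k^\times)=0$, assum\'e. For rows $3$ and $4$ I would deduce $H^3(\Ga,(K^{sh})^\times)=0$ and $H^3(\Ga,(R^{sh})^\times)=0$ from the two hypotheses $H^3(\Ga,\ov k^\times)=0$ and $H^3(\Ga,\Z)=0$: la suite de valuation $0\to (R^{sh})^\times \to (K^{sh})^\times \to \Z \to 0$ reduces these to the case of $(R^{sh})^\times$, and the reduction map $(R^{sh})^\times \to \ov k^\times$ has uniquely divisible kernel $1+\mathfrak m^{sh}$ (caract\'eristique r\'esiduelle nulle), hence is a cohomology isomorphism; this also checks hypotheses (a)--(c) of Proposition \ref{suitespectrale}, hypothesis (b) being $\Br R^{sh}=0$, valable car $R^{sh}$ est strictement hens\'elien \`a corps r\'esiduel s\'eparablement clos. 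For row $1$ the vanishing $H^3(\Ga_K,\ov K^\times)=0$ est assum\'ee. Row $2$ is the only one whose $H^3$-vanishing is not a direct input: here I would run the Hochschild--Serre spectral sequence of $1\to I \to \Ga_{K^h}\to \Ga \to 1$, o\`u $I=\Gal(\ov K/K^{sh})$ est l'inertie; puisque la caract\'eristique r\'esiduelle est nulle on a $\cd(K^{sh})\le 1$, d'o\`u $H^1(I,\ov K^\times)=0$ et $H^q(I,\ov K^\times)=0$ pour $q\ge 2$ (en particulier $\Br K^{sh}=0$), so the only surviving contribution is $H^3(\Ga,(K^{sh})^\times)=0$, giving $H^3(\Ga_{K^h},\ov K^\times)=0$.

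With the five rows in place I would identify the vertical maps as morphisms of low-degree exact sequences, so that commutativity of every square is automatic by naturality. The downward maps from row $1$ to row $2$ are the base-change maps for $K\hookrightarrow K^h$. The maps from row $3$ up to row $2$ are the inflation maps attach\'es \`a la tour $\ov K \supset K^{sh}\supset K^h$, the two middle terms being literally the same group $\Br X_{K^h}/\Br K^h$, d'o\`u le signe d'\'egalit\'e. The maps from row $4$ up to row $3$ are restriction to the generic fibre; the two outer equalities use Proposition \ref{suitespectrale}(1) sur $R^{sh}$, qui donne $\pic \X_{R^{sh}} \oi \pic X_{K^{sh}}$ comme $\Ga$-modules, while the middle upward arrow is $\rho$. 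Finally the downward maps from row $4$ to row $5$ are the specialization maps induced by l'immersion ferm\'ee $\ov Y \hookrightarrow \X_{R^{sh}}$ de la fibre sp\'eciale.

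It remains to prove that $\rho$ is an isomorphism when $H^1(\ov Y,\Q/\Z)=0$. This is exactly Proposition \ref{specialisation} applied to the henselian discrete valuation ring $R^h$, dont le corps r\'esiduel est $k$ (de caract\'eristique nulle), le corps des fractions $K^h$, et les fibres sp\'eciale et g\'en\'erique $Y$ et $X_{K^h}$: cette proposition donne $\Br \X_{R^h}/\Br R^h \oi \Br X_{K^h}/\Br K^h$, qui est pr\'ecis\'ement la fl\`eche $\rho$. The main obstacle is thus not any single deep point but the bookkeeping: organising the five distinct spectral sequences so that their edge maps and low-degree sequences are genuinely compatible, and in particular establishing the auxiliary vanishing $H^3(\Ga_{K^h},\ov K^\times)=0$ for row $2$ and the Picard comparison $\pic \X_{R^{sh}}\oi \pic X_{K^{sh}}$ qui sous-tend les deux signes d'\'egalit\'e entre les lignes $3$ et $4$.
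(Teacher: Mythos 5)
Your proposal follows essentially the same route as the paper: each row is the low-degree exact sequence of the relevant Leray/Hochschild--Serre spectral sequence (i.e.\ Proposition \ref{suitespectrale}(2)), the hypotheses (a)--(c) are checked row by row via the valuation sequence and the uniquely divisible kernel of $(R^{sh})^{\times}\to \ov k^{\times}$, commutativity comes from functoriality, and $\rho$ is an isomorphism by Proposition \ref{specialisation} applied to $R^h$. The only cosmetic difference is your verification of $H^3(\Gamma_{K^h},\ov K^{\times})=0$ for row 2 by a direct inertia-spectral-sequence computation, where the paper instead cites Milne (exemple (c) p.~108 et Rem.~III.3.11); both arguments are equivalent and correct.
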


\begin{proof} 
 V\'erifions que les hypoth\`eses de la proposition \ref{suitespectrale} sont satisfaites
pour chacune des 5 lignes du diagramme. L'hypoth\`ese (a) est par hypoth\`ese
satisfaite pour toutes.

Pour la ligne 5, (b) est automatique et (c) est l'hypoth\`ese $H^3(k,\G_{m})=0$.

Pour la ligne 4, $\Br R^{sh} =\Br \ov{k}=0$. Par ailleurs on a la suite exacte scind\'ee
de $\Ga$-modules galoisiens donn\'ee par la r\'eduction modulo l'id\'eal maximal
$$1 \to {R^{sh}}^1 \to {R^{sh}}^{\times} \to \ov{k}^{\times} \to 1$$
o\`u ${R^{sh}}^1$ est uniquement divisible. L'hypoth\`ese $H^3(k,\G_{m})=0$
assure donc $H^3(\Ga,{R^{sh}}^{\times})=0$.

Pour la ligne 3, on a la suite scind\'ee de $\Ga$-modules 
$$1 \to {R^{sh}}^{\times}  \to {K^{sh}}^{\times} \to \Z \to  0$$
donn\'ee par la valuation.
On a vu que  l'\'egalit\'e $H^3(k,\G_{m})=0$ implique alors 
$H^3(\Ga,{R^{sh}}^{\times})=0$.
Sous l'hypoth\`ese $H^3(\Ga,\Z)=0$ on a donc $H^3(\Ga,{K^{sh}}^{\times})=0$,
c'est-\`a-dire la condition (c).
Comme $\ov{k}$ est alg\'ebriquement clos, on a $$0=\Br R^{sh}=\Br K^{sh}.$$

Pour la ligne 2, l'hypoth\`ese (b) est automatique et 
(c) se lit $H^3(K^h,\G_{m})=0$. D'apr\`es \cite{milne}, 
exemple (c) p. 108, il suffit de voir
que $H^3(R^h,\G_{m})$ et $H^2(k,\Q/\Z)$ sont nuls. Or 
$H^2(k,\Q/\Z)=H^3(\Ga,\Z)$ et   $H^3(R^h,\G_{m})$
 est isomorphe \`a $H^3(k,\G_{m})$ (\cite{milne}, 
Rem. III.3.11).

Pour la ligne 1, l'hypoth\`ese (b) est automatique et (c) se
lit $H^3(\Ga_{K},\ov{K}^{\times})=0$.

La proposition \ref{suitespectrale}  et les hypoth\`eses
sur la cohomologie de $K$ donnent  donc le  diagramme de suites exactes ci-dessus,
 et la proposition \ref{specialisation}. 
montre que, sous l'hypoth\`ese $H^1(\ov{Y},\Q/\Z)=0$,  l'homomorphisme
$\rho :  \Br \X_{R^{h}}/\Br R^h   \to  \Br X_{K^{h}}/\Br K^h$
est un isomorphisme.
 \end{proof}

\begin{rem}\label{surk(t)}
{\rm Les hypoth\`eses  $H^3(\Ga,\ov{k}^{\times})=0$, $H^3(\Ga,\Z)=0$ et 
$H^3(\Ga_{K},\ov{K}^{\times})=0$ sont satisfaites lorsque $k$ est un 
corps de nombres et $K=k(t)$; voir par exemple \cite{cassfro} p. 199
et \cite{Hara}, p. 241.} 
\end{rem}

\begin{prop}\label{stricthenselien}
  Soient $k$ un corps de caract\'eristique z\'ero, $\ov k$ une cl\^oture alg\'ebrique,
et  $\Ga={\rm Gal}({\ov k}/k)$. 
  Soit $X$ une $k$-vari\'et\'e alg\'ebrique
 lisse et g\'eom\'etri\-que\-ment int\`egre, et $f :  X \to \A^1_k$ un $k$-morphisme.
 Soit $K=k(\A^1)$ et soit $G$ un $K$-groupe semisimple simplement connexe.
 Supposons que la fibre g\'en\'erique de $f$ est un espace homog\`ene de $G$, 
\`a stabilisateurs g\'eom\'etriques
  r\'eductifs 
  connexes.
  Il existe un ouvert non vide $U \subset \A^1_k$ tel que
 pour tout $k$-point $M$ de $U$ d'anneau local $R$, avec $R$
de hens\'elis\'e $R^h$ et de hens\'elis\'e strict $R^{sh}$, 
on ait
(notant  $Y/k$ la fibre en $M$ de $f$)~:

(i) Les fl\`eches naturelles 
$$\pic X_{R^{sh}} \to \pic X_{K^{sh}} \to \pic X_{\ov{K}}$$
sont des isomorphismes de r\'eseaux, et la
 fl\`eche naturelle   
$$\pic X_{R^{sh}} \to \pic {\ov Y}$$
est un isomorphisme de $\Ga$-r\'eseaux, o\`u on a not\'e 
$X_{R^{sh}}=X \times_{\A^1_k} R^{sh}$ via la fl\`eche 
$\spec R \to \A^1_k$ associ\'ee \`a $M$; de m\^eme 
$X_{K^{sh}}$ et $X_{\ov{K}}$ sont d\'efinis par changement de base 
\`a partir de la fibre g\'en\'erique $X_K$.

(ii) Les fl\`eches naturelles
$$\Br X_{R^{sh}} \to \Br X_{K^{sh}} \to \Br X_{\ov{K}} $$
sont des isomorphismes de   groupes ab\'eliens finis,
et la fl\`eche de sp\'ecialisation
$$\Br X_{R^{sh}} \to \Br {\ov Y}$$
est un isomorphisme de $\Ga$-modules finis.

(iii) Supposons $H^3(\Ga,\ov{k}^{\times})=0$, 
ce qui est le cas si $k$ est un corps de nombres.
Alors la fl\`eche naturelle 
$$\Br X_{R^h}/\Br R^h \to \Br Y/ \Br k$$
est un isomorphisme de groupes ab\'eliens finis.
\end{prop}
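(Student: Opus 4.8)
L'essentiel de la d\'emonstration r\'eside dans les points (i) et (ii), les isomorphismes de sp\'ecialisation pour le groupe de Picard et le groupe de Brauer g\'eom\'etriques~; le point (iii) s'en d\'eduira formellement gr\^ace au grand diagramme de la proposition~\ref{tresgranddiagramme}. Comme toutes les donn\'ees sont de type fini, on se ram\`ene d'abord \`a un ouvert non vide $U\subset\A^1_k$ au-dessus duquel $f$ est lisse \`a fibres g\'eom\'etriquement int\`egres, le $k(t)$-groupe $G$ et le stabilisateur g\'eom\'etrique $\H$ s'\'etendant en un sch\'ema en groupes r\'eductif lisse de type constant (SGA~3). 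Chaque fibre $Y=X_M$ avec $M\in U(k)$ est alors un espace homog\`ene d'un groupe semisimple simplement connexe \`a stabilisateur r\'eductif connexe de m\^eme type que la fibre g\'en\'erique. Quitte \`a r\'etr\'ecir $U$, on suppose de plus que l'extension finie de $K=k(t)$ qui trivialise l'action de $\Gal(\ov K/K)$ sur le r\'eseau $\Pic X_{\ov K}$ et sur le groupe fini $\Br X_{\ov K}$ est non ramifi\'ee au-dessus de $U$.

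Pour (i), on utilise que pour un espace homog\`ene $Z=G/\H$ avec $G$ semisimple simplement connexe et $\H$ r\'eductif connexe on a $\ov k[Z]^{\times}=\ov k^{\times}$, que $\Pic \ov Z\cong\widehat{\H}$ est un r\'eseau libre de type fini et que $\Br\ov Z$ est fini. La premi\`ere fl\`eche $\Pic X_{R^{sh}}\to\Pic X_{K^{sh}}$ est un isomorphisme par la proposition~\ref{suitespectrale}(1) appliqu\'ee \`a l'anneau de valuation discr\`ete strictement hens\'elien $R^{sh}$. Pour la deuxi\`eme, $\Pic X_{K^{sh}}\to\Pic X_{\ov K}$, la nullit\'e des fonctions inversibles non constantes et le lemme de Hilbert~90 donnent, via la suite exacte fondamentale et $\Br K^{sh}=0$, un isomorphisme $\Pic X_{K^{sh}}\oi(\Pic X_{\ov K})^{I_M}$ o\`u $I_M=\Gal(\ov K/K^{sh})$ est l'inertie en $M$~; la non-ramification force $I_M$ \`a agir trivialement, d'o\`u l'isomorphisme. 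Enfin la fl\`eche de sp\'ecialisation $\Pic X_{R^{sh}}\to\Pic\ov Y$ est un isomorphisme de $\Ga$-r\'eseaux~: les deux membres s'identifient canoniquement au r\'eseau des caract\`eres du stabilisateur r\'eductif, et la rigidit\'e du type des sch\'emas en groupes r\'eductifs sur la base connexe $\spec R^{sh}$ rend la sp\'ecialisation bijective, la $\Ga$-\'equivariance r\'esultant de la naturalit\'e.

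Pour (ii), l'obstacle principal est l'absence de propret\'e de $f$, qui interdit le recours au changement de base lisse et propre. On le contourne par l'acyclicit\'e locale des morphismes lisses, valable ici puisque la caract\'eristique est nulle et tous les coefficients de torsion inversibles~: pour le morphisme lisse $X_{R^{sh}}\to\spec R^{sh}$, les fl\`eches de cosp\'ecialisation $H^i_{\et}(\ov Y,\mu_n)\to H^i_{\et}(X_{\ov K},\mu_n)$ sont des isomorphismes. Les isomorphismes de corps $\Br X_{R^{sh}}\to\Br X_{K^{sh}}\to\Br X_{\ov K}$ proviennent de la proposition~\ref{specialisation} appliqu\'ee \`a $R^{sh}$ (o\`u $\Br R^{sh}=\Br K^{sh}=0$ et $H^1_{\et}(\ov Y,\Q/\Z)=0$) et de la non-ramification. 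Quant \`a la sp\'ecialisation $\Br X_{R^{sh}}\to\Br\ov Y$, on la montre bijective en r\'eduisant modulo $n$~: la suite de Kummer fournit des suites exactes fonctorielles $0\to\Pic/n\to H^2_{\et}(\cdot,\mu_n)\to\Br[n]\to 0$, dont le terme m\'edian est pr\'eserv\'e par l'acyclicit\'e locale et le terme de gauche par (i)~; la compatibilit\'e de la sp\'ecialisation avec la cosp\'ecialisation donne alors $\Br X_{R^{sh}}[n]\oi\Br\ov Y[n]$ pour tout $n$, d'o\`u l'isomorphisme $\Ga$-\'equivariant de groupes finis.

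Le point (iii) se d\'eduit enfin du grand diagramme de la proposition~\ref{tresgranddiagramme}, dont les hypoth\`eses sont satisfaites pour $K=k(t)$ (remarque~\ref{surk(t)})~: la condition $\G_{m,R}\oi p_{*}\G_{m}$ universellement d\'ecoule de $\ov k[Z]^{\times}=\ov k^{\times}$, et $H^1_{\et}(\ov Y,\Q/\Z)=0$ car l'espace homog\`ene $\ov Y=G_{\ov M}/\H_{\ov M}$, quotient d'un groupe simplement connexe par un sous-groupe connexe, est simplement connexe. La proposition~\ref{tresgranddiagramme} fournit l'isomorphisme $\rho:\Br X_{R^h}/\Br R^h\oi\Br X_{K^h}/\Br K^h$, et ses deux derni\`eres lignes forment un diagramme de suites exactes \`a quatre termes reliant $\Br X_{R^h}/\Br R^h$ \`a $\Br Y/\Br k$, dont les fl\`eches verticales externes sont induites par les isomorphismes de sp\'ecialisation de (i) et (ii) sur $\Pic\ov Y$ et $\Br\ov Y$~; le lemme des cinq conclut. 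Le point d\'elicat reste l'\'etape de sp\'ecialisation (i)--(ii), o\`u l'acyclicit\'e locale des morphismes lisses remplace le changement de base propre et lisse, et o\`u il faut soigneusement contr\^oler la $\Ga$-\'equivariance et extraire les isomorphismes entiers, respectivement finis, des comparaisons \`a coefficients finis.
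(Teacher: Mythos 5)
La lacune est au point (ii). Vous affirmez que, le morphisme $X_{R^{sh}}\to\spec R^{sh}$ \'etant lisse, l'acyclicit\'e locale entra\^{\i}ne que les fl\`eches de cosp\'ecialisation $H^i_{\et}(\ov Y,\mu_n)\to H^i_{\et}(X_{\ov K},\mu_n)$ sont des isomorphismes. C'est faux en g\'en\'eral pour un morphisme lisse \emph{non propre} : l'acyclicit\'e locale identifie le complexe des cycles proches d'un faisceau localement constant au faisceau lui-m\^eme, mais la comparaison entre $H^i(X_{\ov K},\mu_n)$ et l'hypercohomologie de $\ov Y$ \`a valeurs dans les cycles proches exige la propret\'e. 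Contre-exemple : la projection $\A^2_k\setminus\{0\}\to\A^1_k$ est lisse, sa fibre g\'eom\'etrique en $0$ est $\G_{m,\ov k}$, sa fibre g\'en\'erique g\'eom\'etrique est $\A^1_{\ov K}$, et $H^1_{\et}(\G_{m,\ov k},\mu_n)=\Z/n$ alors que $H^1_{\et}(\A^1_{\ov K},\mu_n)=0$. C'est pr\'ecis\'ement l'absence de propret\'e qui rend la proposition non triviale, et la d\'emonstration du texte ne repose sur aucune comparaison cohomologique g\'en\'erale : elle exploite la structure d'espace homog\`ene. Au-dessus d'un rev\^etement fini \'etale $V\to U$ on dispose d'une section $\sigma$ de $f_V$, donc d'un $H$-torseur $G_V\to X_V$, et pour tout $V$-sch\'ema $W$ d'une cha\^{\i}ne fonctorielle
$$\hat{\mu}(W)\to\pic H^{ss}_W\leftarrow\pic H_W\leftarrow\Ext^c_{W-gp}(H_W,\G_{m,W})\to\Br_e X_W$$
dont on montre (Sansuc, Gille, Borovoi--Demarche, r\'esolutions flasques, plus un lemme d'injectivit\'e de $\Ext^c\to\pic H$ sur un anneau int\`egre) que toutes les fl\`eches deviennent des isomorphismes entre groupes finis pour $W$ \'egal au spectre de $\ov K$, de $K^{sh}$, de $R^{sh}$ et de $\ov k$ ; la chasse dans le diagramme \`a quatre lignes correspondant fournit les isomorphismes de sp\'ecialisation de (ii). Votre r\'eduction modulo $n$ par la suite de Kummer ne contourne pas le probl\`eme : c'est justement le terme m\'edian $H^2_{\et}(\cdot,\mu_n)$ dont la pr\'eservation est en question.

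Les points (i) et (iii) de votre r\'edaction sont pour l'essentiel conformes au texte : identification de $\pic X_W$ au r\'eseau des caract\`eres du stabilisateur via la proposition 6.10 de Sansuc et le (1) de la proposition \ref{suitespectrale}, puis d\'eduction de (iii) \`a partir du grand diagramme de la proposition \ref{tresgranddiagramme}. Mais (iii) utilise (ii), de sorte que la lacune ci-dessus doit \^etre combl\'ee par l'argument de th\'eorie des groupes que l'on vient d'esquisser.
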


\begin{proof}

 Il existe un ouvert non vide $U \subset \A^1_k$
et un rev\^etement fini \'etale galoisien connexe $V \to U$,
d\'efinissant une extension galoisienne finie $L/K$ de corps,
 tels qu'on ait les propri\'et\'es suivantes~:

(a) Le groupe $G$ s'\'etend en un $U$-groupe semisimple simplement connexe.

(b) Il existe une section $\sigma$ de $f_{V} : X_{V} \to V$.

(c) Le stabilisateur $H$ de cette section est un $V$-groupe r\'eductif   (\`a fibres connexes).

(d) Le groupe $H$ s'inscrit dans une extension de $V$-groupes
$$ 1 \to H^{ss} \to H \to T \to 1$$
o\`u $T$ est un $V$-tore et $H^{ss} $ est un $V$-groupe semisimple, groupe d\'eriv\'e
de $H$.

 (e) On a une suite exacte de $V$-groupes
$$ 1 \to \mu \to H^{sc} \to H^{ss} \to 1,$$
o\`u $H^{sc}$ est un $V$-groupe semisimple simplement connexe,
et $\mu$ est un $V$-sch\'ema en groupes finis ab\'eliens \'etales.

On dispose du $H$-torseur $G_{V} \to  X_{V}$ d\'efini par la section $\sigma$.
Ce torseur a une restriction triviale au-dessus de l'image de $\sigma : V \to X_{V}$.

\`A une telle situation  
 sont associ\'es, de fa\c con fonctorielle en tout $V$-sch\'ema $W$,
des homomorphismes

$$\G_{m}(W) \to \G_{m}(X_{W}),$$
$${\widehat T}(W)  \oi  \widehat{H}(W)  \to \pic X_{W},$$
o\`u la   fl\`eche $\widehat{H}(W)  \to \pic X_{W}$ est associ\'ee  au $H_{W}$-torseur  $G_{W} \to X_{W}$, et
$$ \Ext^c_{W-gp}(H_{W},\G_{m,W}) \to \pic H_{W,}$$
o\`u $\Ext^c$ d\'esigne le groupe des classes d'extensions centrales; 
cette derni\`ere fl\`eche est d\'efinie via le fait qu'une telle extension
centrale d\'efinit ipso facto un $H_{W}$-torseur sous $\G_{m,W}$. 

\begin{lem}  \label{injectlem}
Soit $A$ un anneau int\`egre. Soit $H_{A}$ un $A$-sch\'ema en groupes 
r\'eductifs connexes.
Alors la fl\`eche 
$$\Ext^c_{A-gp}(H_{A},\G_{m,A}) \to \pic H_{A,}$$
d\'efinie comme ci-dessus est injective.
\end{lem} 

\begin{proof}
Soit $F$ le corps des fractions de $A$. Soit 
$$1 \to \G_{m,A} \to E \to H_A \to 1$$
une extension centrale dont l'image dans $\pic H_{A}$ est nulle.
Cela signifie qu'il existe une section $\sigma_A : H_A \to E$ 
du morphisme de $A$-sch\'emas $E \to H_A$, section dont on peut supposer
qu'elle envoie le neutre sur le neutre (quitte \`a la multiplier 
par un \'el\'ement de $\G_{m,A}(A)$). Il s'agit alors de montrer 
que $\sigma_A$ est de plus un homomorphisme 
de $A$-sch\'emas en groupes. L'argument de la 
proposition 3.2 dans \cite{CTresol} (reposant sur le lemme de 
Rosenlicht) donne alors que la restriction de 
$\sigma_A$ \`a la fibre g\'en\'erique est un morphisme 
$H_F \to E_F$ de $F$-sch\'emas en groupes. Ceci implique que le morphisme 
de $A$-sch\'emas s\'epar\'es
$$H_A \times H_A \to E \quad (x,y) \mapsto \sigma_A(xy)\sigma_A(y)^{-1} 
\sigma_A(x)^{-1}$$
est constant \'egal au neutre sur la fibre g\'en\'erique, donc partout;
d'o\`u le r\'esultat.
\end{proof}

Reprenons la preuve de la proposition~\ref{stricthenselien}. 
Pour tout $V$-sch\'ema $W$ on a aussi un homomorphisme
$$ \Ext^c_{W-gp}(H_{W},\G_{m,W})   \to \Br_{e} X_{W},$$
o\`u $ \Br_{e} X_{W} \subset \Br X_{W} $ est le sous-groupe form\'e des \'el\'ements
nuls sur $\sigma(W)$, et
o\`u la   fl\`eche est d\'efinie par cup-produit
avec la classe dans $H^1_{\et}(X_{W}, H_{W})$ du $H_{W}$-torseur  $G_{W} \to X_{W} $, 
 et des homå?omorphismes
 $$\pic H_{W}  \to \pic H^{ss}_{W},$$
 $$\hat{\mu}(W)  \to \pic H^{ss}_{W},$$
 ce  dernier  \'etant associ\'e au $\mu_{W}$-torseur $H^{sc}_{W} \to H^{ss}_{W}$.
 
 Pour $W$ le spectre d'un corps, ces diverses applications sont discut\'ees
 dans \cite{sansuc}, \cite{CTX} et \cite{borodemarche}.
 
 On a donc des homomorphismes
\begin{equation} \label{3etoiles}
 \hat{\mu}(W)  \to \pic H^{ss}_{W}  \leftarrow \pic H_{W}  \leftarrow  \Ext^c_{W-gp}(H_{W},\G_{m,W}) \to  \Br_{e} X_{W}  
\end{equation}

 \medskip
 
 Soit $F$ un corps de caract\'eristique z\'ero.  
 Pour un $F$-groupe $G$ semisimple simplement connexe,  
  on a les propri\'et\'es   suivantes 
  $$ F^{\times} \oi F[G]^{\times};$$
   $$ \pic G=0;$$
 $$ \Br F  \oi \Br G.$$
 Pour $F$ alg\'ebriquement clos, les deux premi\`eres propri\'et\'es sont bien connues \cite{sansuc}. La troisi\`eme l'est
 aussi, elle est \'etablie de fa\c con alg\'ebrique dans \cite{sgille}.
 Pour $F$ quelconque, on en d\'eduit le r\'esultat g\'en\'eral en utilisant la suite
 spectrale de Hochschild-Serre pour la cohomologie \'etale du faisceau $\G_{m}$.

\medskip
 
Supposons que
 $W$ est le spectre d'un corps $F$.
  La proposition 6.10 de  \cite{sansuc} montre que la fl\`eche naturelle
 $$ \widehat{T}(F) \to \pic X_{F}$$
 est un isomorphisme.
 
 Consid\'erons la suite d'homomorphismes :
  $$ \hat{\mu}(F)  \to \pic H^{ss}_{F}  \leftarrow \pic H_{F}  \leftarrow  \Ext^c_{F-gp}(H_{F},\G_{m,F}) \to  \Br_{e} X_{F}.  $$

 La premi\`ere fl\`eche est un isomorphisme  (\cite[Lemme 6.9]{sansuc}).
 La seconde fl\`eche est un isomorphisme si $F$ est alg\'ebriquement clos
  (\cite[Cor. 6.11 et Rem. 6.11.3]{sansuc}).
  La troisi\`eme fl\`eche est un isomorphisme (\cite[Cor. 5.7]{CTresol}).
  La quatri\`eme fl\`eche est un isomorphisme; ceci r\'esulte de
    \cite[Thm. 2.8]{borodemarche} et des  \'egalit\'es $ \pic G_{F}=0$
et $ \Br F  = \Br G_{F}.$

 Soit $M$  un $k$-point de $U$. Soit $R$ le hens\'elis\'e de $\A^1_{k}$
 en $M$ et $R^{sh}$ le hens\'elis\'e strict. Fixons une factorisation
 $ \spec R^{sh} \to V \to U$ induisant $\spec R^{sh} \to \spec R$.
 Fixons aussi des plongements $K \subset K^h \subset K^{sh} \subset {\ov K}$.
 Soit $Y/k$ la fibre de $f :X \to \A^1_{k}$ en $M$.
  
 On a un diagramme commutatif

$$\begin{array}{ccccccccccc}
\widehat{T}(R^{sh})  &\to&  \pic X_{R^{sh}} \cr
\downarrow && \downarrow \cr
\widehat{T}(K^{sh}) &\to&  \pic X_{K^{sh}} \cr
\downarrow && \downarrow \cr
\widehat{T}(\ov{K}) &\to&  \pic X_{\ov{K}}, \cr
 \end{array}
$$
Comme $T_{R^{sh}}$
est un tore d\'eploy\'e, les fl\`eches verticales de gauche sont des
isomorphismes.
Comme on a dit, la proposition 6.10 de \cite{sansuc}
implique que les deux fl\`eches horizontales inf\'erieures
sont des isomorphismes. Le (1) de la proposition  \ref{suitespectrale} 
montre que la verticale sup\'erieure droite est un isomorphisme.
On conclut que toutes les fl\`eches dans ce diagramme sont
des isomorphismes.

On a par ailleurs un diagramme commutatif
$$\begin{array}{ccccccccccc}
\widehat{T}(R^{sh})  &\to&  \pic X_{R^{sh}} \cr
\downarrow && \downarrow \cr
\widehat{T}({\ov k})  &\to&  \pic \ov{Y}
 \end{array}
$$
dans lequel toutes les fl\`eches sauf peut-\^etre la fl\`eche
 $ \pic X_{R^{sh}} \to  \pic \ov{Y}  $
 sont des isomorphismes. Ainsi cette derni\`ere fl\`eche est un isomorphisme,
 ce qui ach\`eve d'\'etablir l'\'enonc\'e  (i).
 
Sur $\ov{K}$, tous les groupes intervenant dans (\ref{3etoiles}) 
sont isomorphes
au groupe fini $\hat{\mu}(\ov{K})$. 
Les groupes $\Br X_{\ov K}$ et
   $\Ext^c_{{\ov K}-gp}(H_{{\ov K}},\G_{m,{\ov K}})$
sont donc finis.

Soit $L/K$ une extension finie galoisienne
de corps telle que l'application  
$\Br X_{L} \to \Br X_{\ov K}$
soit surjective et que l'application 
$$\Ext^c_{{L}-gp}(H_{{L}},\G_{m,{L}}) \to \Ext^c_{{\ov K}-gp}(H_{{\ov K}},\G_{m,{\ov K}})$$
soit surjective.  

Quitte \`a restreindre $U$, on peut de plus  supposer que l'extension $L/K$ est non ramifi\'ee sur $U$,
et que tout \'el\'ement de $\Ext^c_{{\ov K}-gp}(H_{{\ov K}},\G_{m,{\ov K}})$
provient d'un \'el\'ement de $\Ext^c_{S-gp}(H_{S},\G_{m,S})$,
o\`u $S$ est la fermeture int\'egrale de $U$ dans $L$.
 
Pour l'anneau local $R$ d'un point   $M\in U(k)$, on a
$$K \subset L \subset K^{sh} \subset \ov{K},$$
 l'application
 $ \Br X_{K^{sh}} \to \Br   X_{\ov K}$ 
est surjective et 
  l'application 
$$ \Ext^c_{ {R^{sh}}-gp}(H_{ {R^{sh}}},\G_{m, {R^{sh}}})  \to \Ext^c_{{\ov K}-gp}(H_{{\ov K}},\G_{m,{\ov K}})$$
est surjective.

\medskip

D'apr\`es  (\ref{3etoiles}),
on a un diagramme commutatif d'applications naturelles $$\begin{array}{ccccccccccc}
  \hat{\mu}({\ov K}) & \to & \pic H^{ss}_{{\ov K}}  &\leftarrow & \pic H_{{\ov K}} & \leftarrow & \Ext^c_{{\ov K}-gp}(H_{{\ov K}},\G_{m,{\ov K}})& \to&  \Br_{e} X_{{\ov K}}\cr
\uparrow && \uparrow&&\uparrow&&\uparrow && \uparrow  \cr
\hat{\mu}({K^{sh}}) & \to&  \pic H^{ss}_{{K^{sh}}}   &\leftarrow &  \pic H_{{K^{sh}}}   & \leftarrow & 
   \Ext^c_{K^{sh}-gp}(H_{K^{sh}},\G_{m,K^{sh}})  
  & \to &  \Br_{e} X_{  K^{sh}  }  \cr
\uparrow && \uparrow&&\uparrow&&\uparrow && \uparrow \cr
  \hat{\mu}( {R^{sh}})  & \to&  \pic H^{ss}_{ {R^{sh}}}   &\leftarrow & \pic H_{ {R^{sh}}}  &\leftarrow & \Ext^c_{ {R^{sh}}-gp}(H_{ {R^{sh}}},\G_{m, {R^{sh}}}) & \to&   \Br_{e} X_{ {R^{sh}} }\cr
\downarrow && \downarrow&&\downarrow&&\downarrow && \downarrow \cr
 \hat{\mu}({\ov k})  & \to&  \pic H^{ss}_{{\ov k}}  &\leftarrow & \pic H_{{\ov k}}   &\leftarrow & \Ext^c_{{\ov k}-gp}(H_{{\ov k}},\G_{m,{\ov k}}) & \to&   \Br_{e} \ov{Y} 
 \end{array}
$$

Les fl\`eches dans la verticale de gauche sont toutes des isomorphismes.
Dans la premi\`ere ligne, toutes les fl\`eches sont des isomorphismes.
Dans la seconde ligne, toutes les fl\`eches sauf peut-\^etre la fl\`eche
$ \pic H^{ss}_{{K^{sh}}}   \leftarrow   \pic H_{{K^{sh}}} $ sont des isomorphismes.
La fl\`eche $  \pic H^{ss}_{{K^{sh}}}   \leftarrow   \pic H_{{K^{sh}}} $ est injective via la proposition~6.10 de \cite{sansuc}, 
car $\pic T_{K^{sh}}=0$.
Comme la fl\`eche $\Br_{e} X_{  K^{sh}  } \to  \Br_{e} X_{{\ov K}}$ est par nos hypoth\`eses
surjective, on conclut que toutes les fl\`eches dans les deux premi\`eres lignes,
ainsi qu'entre ces deux lignes, sont des isomorphismes.

Les fl\`eches verticales entre la deuxi\`eme et la troisi\`eme ligne, sauf peut-\^etre les deux derni\`eres
 \`a droite, sont clairement des isomorphismes. On en d\'eduit que les deux premi\`eres fl\`eches de la troisi\`eme ligne
 sont des isomorphismes.
 La derni\`ere fl\`eche verticale entre les lignes trois et deux est une injection (r\'egularit\'e des
 sch\'emas consid\'er\'es).  
 
 Comme $Y$ est un espace homog\`ene d'un groupe semisimple simplement connexe
 pour une action dont les stabilisateurs sont connexes, on a  $H^1(\ov{Y},\Q/\Z)=0$ et la proposition
  \ref{specialisation} montre que  la fl\`eche en question est un isomorphisme.

 La derni\`ere ligne est compos\'ee d'isomorphismes.
On en conclut que les trois premi\`eres fl\`eches verticales entre la troisi\`eme et la
quatri\`eme ligne sont des isomorphismes.

D'apr\`es ce qui pr\'ec\`ede, l'application  
$$ \Ext^c_{ {R^{sh}}-gp}(H_{ R^{sh}} ,\G_{m, R^{sh} })
 \to  
  \Ext^c_{K^{sh}-gp}(H_{K^{sh}},\G_{m,K^{sh}}) $$
est surjective.

Par ailleurs la fl\`eche  $ \Ext^c_{ {R^{sh}}-gp}(H_{ {R^{sh}}},\G_{m, {R^{sh}}}) \to  \pic H_{ {R^{sh}}}   $  est injective d'apr\`es le 
lemme~\ref{injectlem}.
Une chasse au diagramme donne alors que toutes les fl\`eches sont des isomorphismes (de groupes finis).

Comme   $\Br_{e}(\bullet)=\Br(\bullet)$ pour chacun des groupes dans la verticale de droite,
on a maintenant \'etabli l'\'enonc\'e  (ii).

 L'\'enonc\'e (iii) r\'esulte
alors de la proposition \ref{tresgranddiagramme}.
 et de sa d\'emonstration, qui pour les deux   lignes inf\'erieures du grand diagramme,
 utilise seulement l'hypoth\`ese $H^3(k,\G_{m})=0$. 
\end{proof}

On peut maintenant \'etablir le th\'eor\`eme suivant, qu'on comparera
avec \cite[Thm. 2.3.1]{Hara2}.

   \begin{theo}\label{hilbertirr}
  Soit $k$ un corps de nombres. Soit $X$ une $k$-vari\'et\'e 
 lisse et g\'eom\'etri\-que\-ment int\`egre, et $f :  X \to \A^1_k$ un $k$-morphisme.
 Soit $K=k(\A^1)$ et soit $G$ un $K$-groupe semisimple simplement connexe.
 Supposons que la fibre g\'en\'erique de $f$ est un espace homog\`ene de $G$, 
\`a stabilisateurs g\'eom\'etriques  r\'eductifs  connexes. 
Il existe alors un sous-ensemble hilbertien $Hil$ de $\A^1(k)$
tel que pour tout point $m \in Hil$ la  fibre $X_m$ soit lisse et que 
l'on ait un isomorphisme naturel de groupes finis
$$\Br X_{\eta} /\Br K \oi \Br X_m /\Br k.$$
   \end{theo}

 \begin{proof}
 On reprend les notations de la d\'emonstration pr\'ec\'edente. On
note d\'ej\`a que 
$\G_{m}=p_{*}\G_{m} $ universellement sur la fibre g\'en\'erique 
$p : X_{\eta} \to \spec K$, via le lemme de Rosenlicht 
(car $G$ est semi-simple).

 On suppose que l'extension finie galoisienne
 $L/K$ est suffisamment grosse pour que comme pr\'ec\'edemment
 l'application  $\Br X_{L} \to \Br X_{\ov K}$
soit surjective, mais aussi 
  que l'application  de restriction
 $\pic X_{L}   \to \pic X_{\ov K}$
 soit surjective, et donc un isomorphisme de r\'eseaux, l'injectivit\'e r\'esultant
 de $\ov{K}^{\times} \oi  \ov{K}[X_{\eta}]^{\times}$.
 Soit $\Delta={\rm Gal}(L/K)$.
 
 D'apr\`es le th\'eor\`eme d'irr\'eductibilit\'e de Hilbert, il existe un
 sous-ensemble hilbertien $Hil$ de $\A^1(k)$
tel que pour tout point $m \in Hil$ la  fibre $Y=X_m$ soit lisse,
l'extension $L/K$ soit non ramifi\'ee en $m$ 
et $m$ soit  {\it inerte} dans $L$. En tout tel point on a  donc une extension induite
de corps de nombres $l/k$ de groupe de Galois $\Delta$.
La hens\'elisation donne \'egalement une extension galoisienne $L^h/K^h$ de groupe $\Delta$.
On a les inclusions de corps $$K \subset L \subset L^h  
\subset K^{sh} \subset {\ov K}.$$
Les applications
  $\pic X_{L}  \to \pic X_{L^{h}}  \to \pic X_{K^{sh}} \to  \pic X_{\ov K}$
  sont des isomorphismes de r\'eseaux.

  On consid\`ere le diagramme commutatif de suites exactes 
(proposition~\ref{tresgranddiagramme} et remarque~\ref{surk(t)})~:
{\small
$$\begin{array}{ccccccccccc}
0 &\to&  H^1(K,\pic  X_{\ov K}) &\to& \Br X/\Br K &\to&
[\Br X_{ \ov K}]^{\Ga_K} &\to&  H^2(K,\pic X_{\ov K}) \cr

&& \downarrow && \downarrow& &  \downarrow && \downarrow &  & \\
0 &\to&  H^1(K^h,\pic X_{\ov K}) &\to&  \Br X_{K^h}/\Br K^h &\to& 
[\Br X_{ \ov K}]^{\Ga_{K^h}} &\to&  H^2(K^h,\pic X_{\ov K}) \cr
\end{array}
$$}

On a 
$$ H^1(\Delta,\pic X_{L})= H^1(K, \pic X_{\ov K})$$
et
$ H^1(\Delta,\pic X_{L^h})= H^1(K^h, \pic X_{\ov K}).$
Ainsi la premi\`ere fl\`eche verticale est un isomorphisme.

Comme $\Br X_{L} \to \Br X_{ \ov K}$ est surjectif,
 l'action de $\Ga_{K}$ sur $\Br X_{ \ov K}$ se factorise par $\Delta$.
 On a alors $[\Br X_{ \ov K}]^{\Ga_K} \oi [\Br X_{ \ov K}]^{\Ga_{K^h}}$.
 
 On a, via la suite de restriction-inflation, le diagramme commutatif de suites exactes~: 
 
{\small
$$\begin{array}{ccccccccccc}
 0 &\to&   H^2(\Delta, \pic X_{L}) &\to&  H^2(K,\pic X_{\ov K}) &\to&  H^2(L,\pic X_{\ov K} ) \\
 &&\downarrow &&\downarrow&& \downarrow \\
 0 &\to& H^2(\Delta, \pic X_{L^h}) &\to& H^2(K^h,\pic X_{\ov K} ) &\to&  H^2(L^h,\pic X_{\ov K} ).
  \end{array}
$$}  
  La fl\`eche $H^2(\Delta, \pic X_{L}) \to H^2(\Delta, \pic X_{L^h}) $
  est un isomorphisme.
  
  Comme le groupe $\Br X_{ \ov K}$ est fini, on peut \`a l'avance choisir $L$
  de fa\c con que de plus l'image de $[\Br X_{ \ov K}]^{\Ga_K}$ dans $ H^2(K,\pic X_{\ov K}) $
  s'annule dans $H^2(L,\pic X_{\ov K} )$ et donc dans $H^2(L^h,\pic X_{\ov K} ).$
 On a alors le diagramme commutatif de suites exactes
  {\small
$$\begin{array}{ccccccccccc}
0 &\to&  H^1(K,\pic X_{\ov K}) &\to& \Br X/\Br K &\to&
[\Br X_{ \ov K}]^{\Ga_K} &\to&  H^2(\Delta, \pic X_{L}) \cr
&& \downarrow && \downarrow& &  \downarrow && \downarrow &  & \\
0 &\to&  H^1(K^h,\pic X_{\ov K}) &\to&  \Br X_{K^h}/\Br K^h &\to& 
[\Br X_{ \ov K}]^{\Ga_{K^h}} &\to&  H^2(\Delta, \pic X_{L^h}) \cr
\end{array}
$$}
 dans lequel toutes les fl\`eches sauf peut-\^etre $ \Br X/\Br K \to 
 \Br X_{K^h}/\Br K^h $ sont des isomorphismes. On en conclut que
 cette derni\`ere fl\`eche est un isomorphisme. En combinant avec 
 la proposition \ref{stricthenselien},
  on obtient le th\'eor\`eme.
   \end{proof}
 
 \begin{rem} {\rm Lorsque  les stabilisateurs g\'eom\'etriques  
pour l'action de $G$ sur la fibre
 g\'en\'erique $X_{K}$ sont des tores alg\'ebriques, ce qui sera le cas
 au \S 4,  les d\'emonstrations de ce paragraphe se simplifient.
 On a  en effet alors $\hat{\mu}=0$,  $\Br X_{\ov{K}}=0$, $\Br X_{R^{sh}} =0$, $\Br \ov{Y} =0$.}
 \end{rem}

\section{Le th\'eor\`eme}

On commence par quelques lemmes pr\'eliminaires.

\begin{lem}\label{florence}
 Soient $k$ un corps de nombres, $G$ un $k(t)$-groupe lin\'eaire connexe, $Y$
 une $k(t)$-vari\'et\'e espace homog\`ene de $G$. Soit $v$ une place   de $k$.
 Soit $k_{v}$ le compl\'et\'e de $k$ en $v$ et $k_{v}^{h} \subset k_{v}$ le sous-corps des \'el\'ements
 alg\'ebriques sur $k$.
 Si $Y$ poss\`ede un $k_{v}(t)$-point, alors $Y$ poss\`ede un $k_{v}^{h}(t)$-point.
  \end{lem}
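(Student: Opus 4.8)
L'id\'ee est de ramener l'\'enonc\'e au fait purement corporel que $k_v^h$ est existentiellement clos dans $k_v$, puis d'\'etablir ce dernier par approximation. Observons d'abord que l'hypoth\`ese selon laquelle $Y$ est un espace homog\`ene ne sert pas vraiment ici~: seul importe que $Y$ soit une $k(t)$-vari\'et\'e, donc, sur une carte affine contenant l'image du point consid\'er\'e, d\'efinie par un nombre fini de polyn\^omes $F_1,\dots,F_r\in k[t][x_1,\dots,x_N]$ (apr\`es avoir chass\'e les d\'enominateurs des coefficients).

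Je commencerais par coder le point donn\'e. Un $k_v(t)$-point de $Y$ est un $N$-uplet $(y_1,\dots,y_N)\in k_v(t)^N$ v\'erifiant $F_i(t;y_1,\dots,y_N)=0$ pour tout $i$. En \'ecrivant $y_j=g_j/h$ avec $g_j,h\in k_v[t]$ de degr\'es born\'es (ceux du point donn\'e) et en chassant les d\'enominateurs, ces conditions \'equivalent \`a la nullit\'e identique en $t$ des polyn\^omes $h^{d_i}F_i(t;g_1/h,\dots,g_N/h)$, jointe \`a $h\ne 0$. En annulant les coefficients de chaque puissance de $t$, on obtient un syst\`eme fini $\Sigma$ d'\'equations polynomiales, \`a coefficients dans $k$, portant sur les (en nombre fini) coefficients $\mathbf{a}=(a_1,\dots,a_m)$ des $g_j$ et de $h$; la non-nullit\'e de $h$ se code en isolant un coefficient $c$ de $h$ non nul pour le point donn\'e et en lui adjoignant l'in\'equation $c\ne 0$ (ou l'\'equation $cu=1$ en une variable auxiliaire $u$). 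Par construction $\Sigma$ poss\`ede une solution $\mathbf{a}\in k_v^m$, et toute solution de $\Sigma$ dans $k_v^h$ fournit des $g_j',h'\in k_v^h[t]$ avec $h'\ne 0$ et $F_i(t;g'/h')=0$, c'est-\`a-dire le $k_v^h(t)$-point cherch\'e.

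Il reste alors \`a montrer que tout syst\`eme fini d'\'equations polynomiales \`a coefficients dans $k_v^h$ ayant une solution dans $k_v$ en a une dans $k_v^h$, autrement dit que $k_v^h$ est existentiellement clos dans $k_v$. Lorsque $v$ est non archim\'edienne, l'anneau $O_v^h$ (hens\'elis\'e de l'anneau local de $k$ en $v$) est un anneau de valuation discr\`ete hens\'elien excellent de compl\'et\'e $O_v$; le th\'eor\`eme d'approximation de Greenberg (cas particulier de l'approximation d'Artin) s'applique~: apr\`es avoir remplac\'e la solution dans $k_v$ par une solution \`a coefficients dans $O_v$ (en multipliant $(g_j,h)$ par un scalaire convenable, ce qui ne change pas les $y_j$) et cod\'e l'in\'equation $c\ne 0$ \`a l'aide d'une variable suppl\'ementaire, le syst\`eme obtenu sur $O_v^h$, ayant une solution dans $O_v$, admet une solution exacte dans $O_v^h$ approchant celle-ci \`a une pr\'ecision $v$-adique arbitraire; en prenant cette pr\'ecision assez grande on pr\'eserve $c\ne 0$, d'o\`u une solution dans $k_v^h$. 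Lorsque $v$ est archim\'edienne, $k_v^h$ est la cl\^oture r\'eelle (resp. la cl\^oture alg\'ebrique $\overline{\Q}$) de $k$ dans $k_v=\R$ (resp. $\C$), et la cl\^oture existentielle r\'esulte de la compl\'etude au sens des mod\`eles de la th\'eorie des corps r\'eels clos (resp. alg\'ebriquement clos); pour $v$ complexe cela tient plus simplement \`a ce qu'une vari\'et\'e non vide sur le corps alg\'ebriquement clos $k_v^h$ a d\'ej\`a un $k_v^h$-point.

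Le seul point vraiment d\'elicat est le passage \`a l'anneau de valuation dans le cas non archim\'edien~: il faut chasser les d\'enominateurs pour appliquer Greenberg au compl\'et\'e $O_v$, tout en s'assurant que la solution approch\'ee v\'erifie encore $h'\ne 0$; c'est pourquoi il est commode d'approcher \`a haute pr\'ecision $v$-adique un coefficient de $h$ de valuation minimale. Tout le reste est formel.
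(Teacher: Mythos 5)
Votre d\'emonstration est correcte, mais elle suit un chemin r\'eellement diff\'erent de celui de l'article. L'article compactifie la fibre g\'en\'erique en une $k(t)$-vari\'et\'e projective lisse $Z$ (via Hironaka), param\`etre les sections du mod\`ele $\mathcal{Z}\to\P^1_k$ par les $k$-vari\'et\'es de type fini $W_i$ (sch\'emas de morphismes de degr\'e born\'e), applique \cite[Chap. 3.6, Cor. 10]{BLR} pour passer de $W_i(k_v)\neq\emptyset$ \`a $W_i(k_v^h)\neq\emptyset$, puis doit invoquer le th\'eor\`eme de Florence \cite{florence} pour redescendre d'un $k_v^h(t)$-point de la compactification $Z$ \`a un $k_v^h(t)$-point de l'espace homog\`ene $Y$ lui-m\^eme. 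Vous remplacez l'espace des sections du mod\`ele projectif par l'espace affine des uplets de polyn\^omes de degr\'e born\'e param\'etrant directement les points d'une carte affine de $Y$; l'ingr\'edient cl\'e est le m\^eme (la cl\^oture existentielle de $k_v^h$ dans $k_v$, c'est-\`a-dire l'approximation d'Artin--Greenberg dans le cas non archim\'edien --- c'est pr\'ecis\'ement le contenu de la r\'ef\'erence \`a \cite{BLR} --- et la mod\`ele-compl\'etude de RCF/ACF dans le cas archim\'edien), mais comme votre espace de param\`etres garde la trace de l'appartenance \`a $Y$, vous \'evitez \`a la fois Hironaka et Florence, et vous obtenez au passage que l'hypoth\`ese d'espace homog\`ene est superflue pour ce lemme. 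Deux points m\'eritent d'\^etre explicit\'es : une carte affine d'une vari\'et\'e quasi-projective s'obtient en inversant une fonction, ce qui ajoute une in\'equation du m\^eme type que $h\neq 0$ et se traite de la m\^eme fa\c con; et la mise \`a l'\'echelle $(g_j,h)\mapsto(\lambda g_j,\lambda h)$ pr\'eserve bien le syst\`eme $\Sigma$ parce que les \'equations obtenues en chassant les d\'enominateurs sont homog\`enes en les coefficients. Ce que l'approche de l'article ach\`ete, c'est de ne citer que des objets et des \'enonc\'es standard (sch\'emas de Hom, \cite{BLR}, \cite{florence}) plut\^ot que de mener un codage \`a la main; la v\^otre est plus \'el\'ementaire et plus g\'en\'erale.
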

  \begin{proof} 

  Soit $Z \subset \P^N_{k(t)}$ une compactification projective lisse de la $k(t)$-vari\'et\'e quasi-projective $Y$ (une telle compactification 
existe via le th\'eor\`eme de r\'esolution des singularit\'es d'Hironaka).
  Soit ${\mathcal Z} \subset \P^N_{k}\times_{k} \P^1_{k}$ l'adh\'erence sch\'ematique de $Z$.
  L'espace des $k$-morphismes ${\rm Mor}(\P^1_{k}, \P^n_{k})$ est la r\'eunion disjointe
  des $k$-vari\'et\'es ${\rm Mor}_{d}(\P^1_{k}, \P^n_{k})$ param\'etrisant les morphismes de degr\'e $d$
  pour $d$ entier, $d\geq 0$. Ainsi
  l'espace des sections de ${\mathcal Z} \to \P^1_{k}$ est une union disjointe de
 $k$-vari\'et\'es   $W_{i}$. Par hypoth\`ese, il existe une $k_{v}(t)$-section de $Y/k(t)$,
  donc de $Z/k(t)$,
  donc une $k_{v}$-section de ${\mathcal Z} \to \P^1_{k}$. Ainsi l'un des $W_{i}(k_{v})$ est non vide.
  Il est connu (cf. \cite[Chap. 3.6, Cor. 10]{BLR}) que ceci implique
  $W_{i}(k_{v}^h) \neq \emptyset$.
  Par cons\'equent, la $k(t)$-vari\'et\'e lisse $Z$ poss\`ede un $k_{v}^h(t)$-point.
  Un th\'eor\`eme de M. Florence \cite{florence} assure alors que l'espace homog\`ene $Y/k(t)$ poss\`ede un
$k_{v}^h(t)$-point.
 \end{proof}

Dans le lemme suivant, on ne peut pas remplacer $k^h_{v }(t)$ par $k_{v}(t)$.

\begin{lem}\label{surjhenselkt}
Soit $k$ un corps de nombres. Soit $k^h_{v }$
le sous-corps des \'el\'ements  de $k_{v}$ alg\'ebriques sur $k$
(pour $v$ non archim\'edienne c'est le hens\'elis\'e de $k$ en $v$).
Soit $t$ une variable.
L'application naturelle $\Br k(t) \to \Br k^h_{v}(t)$ est surjective.
\end{lem}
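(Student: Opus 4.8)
The plan is to use Faddeev's exact sequence describing $\Br k(t)$ by residues along the closed points of $\A^1$, and to compare it with the analogous sequence over $F:=k^h_v$ via restriction. Writing $R_k=\bigoplus_{P\in(\A^1_k)^{(1)}}H^1(\kappa(P),\Q/\Z)$, recall Faddeev's exact sequence
$$0\to\Br k\to\Br k(t)\xrightarrow{\ \partial\ }R_k\xrightarrow{\ \mathrm{cor}\ }H^1(k,\Q/\Z)\to 0,$$
where $\partial$ is the sum of residues and the last arrow is the sum of corestrictions; I write the same sequence over $F$ with groups $R_F$ and $\Br F$. Restriction along $\spec F(t)\to\spec k(t)$ gives a morphism of these two sequences. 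Setting $C_k=\ker(\mathrm{cor}\colon R_k\to H^1(k,\Q/\Z))=\mathrm{im}\,\partial$ and likewise $C_F$, I obtain a map of short exact sequences $0\to\Br k\to\Br k(t)\to C_k\to 0$ and $0\to\Br F\to\Br F(t)\to C_F\to 0$. Since the inclusion $F\subset k_v$ induces an isomorphism $\Br F\cong\Br k_v\cong\Q/\Z$ (the henselization has the same residue field as $k_v$) and global class field theory makes $\Br k\to\Br k_v$ surjective, the left-hand vertical arrow is onto; the snake lemma then identifies $\mathrm{coker}(\Br k(t)\to\Br F(t))$ with $\mathrm{coker}(C_k\to C_F)$. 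So it suffices to prove that $C_k\to C_F$ is surjective.

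Next I would analyse the residue map $R_k\to R_F$ point by point. A closed point $P\in\A^1_k$ with residue field $E:=\kappa(P)$ (a number field) has, as preimage in $\A^1_F$, the scheme $\spec(E\otimes_k F)$; since $F=k^h_v$ is algebraic over $k$ one has the canonical decomposition $E\otimes_k F\cong\prod_{w\mid v}E^h_w$, where $w$ runs over the places of $E$ above $v$ and $E^h_w$ is the henselization. As $E^h_w\hookrightarrow E_w$ induces an isomorphism of absolute Galois groups, $H^1(E^h_w,\Q/\Z)=H^1(E_w,\Q/\Z)$, and the $P$-component of $R_k\to R_F$ becomes the localisation map
$$H^1(E,\Q/\Z)\longrightarrow\bigoplus_{w\mid v}H^1(E_w,\Q/\Z).$$
Thus surjectivity of $C_k\to C_F$ amounts to this: every family of local characters at the places above $v$ (for the various residue fields $E$ occurring) satisfying the global corestriction relation that defines $C_F$ must be realisable by a family of global characters satisfying the corresponding relation defining $C_k$. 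This is precisely where the hypothesis $F=k^h_v$, rather than $k_v$, is indispensable: over $k_v$ a residue field could be $k_v$ itself, and $H^1(k,\Q/\Z)\to H^1(k_v,\Q/\Z)$ is very far from surjective, whereas here every residue field is algebraic over $k$.

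The realisation of prescribed local characters at the places above a fixed place by a global character is a Grunwald–Wang problem, and this is where I expect the main difficulty. For the prime-to-$2$ part the localisation map above is surjective with no constraint, so those components lift freely. The obstacle is the Grunwald–Wang special case (Wang's counterexample, already for $E=\Q$ at the dyadic place), in which $H^1(E,\Q/\Z)\to\bigoplus_{w\mid v}H^1(E_w,\Q/\Z)$ fails to be surjective, its image being a subgroup of index $2$ cut out by a single reciprocity-type relation. The point that must be pushed through is that I am not asked to lift an arbitrary family: I only need to lift families lying in $C_F$, i.e.\ satisfying the relation $\sum\mathrm{cor}_{E_w/k_v}=0$. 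The plan is to match the $\Z/2$-obstruction of Grunwald–Wang with this corestriction relation — the exceptional local character that is not individually globally realisable being exactly the one excluded by the reciprocity condition defining $C_F$ — and, where convenient, to absorb any residual discrepancy by enlarging the support with an auxiliary closed point of $\A^1_k$ whose residue field splits favourably at $v$, at which a compensating global character with controlled residues can be produced. Once this is arranged, $C_k\to C_F$ is surjective and the lemma follows. (For $v$ archimedean the statement is either trivial, when $v$ is complex and $F=\ov{k}$, or reduces to the same analysis with $\Br\R$, when $v$ is real.)
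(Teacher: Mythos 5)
Your overall strategy (Faddeev's sequence over $k(t)$ and over $F(t)$ with $F=k^h_v$, compared via restriction, plus class field theory for the constant part and a Grunwald--Wang argument for the residues) is the same as the paper's, but your execution has two problems. First, the four-term sequence you write is not exact as stated: with $R_k=\bigoplus_{P\in(\A^1_k)^{(1)}}H^1(\kappa(P),\Q/\Z)$ indexed by the \emph{affine} closed points, the residue map $\partial\colon\Br k(t)\to R_k$ is already surjective (the residue at infinity, whose residue field is $k$ itself, can always be chosen to kill the total corestriction), so $\mathrm{im}\,\partial$ is all of $R_k$ and not the kernel of $\mathrm{cor}$. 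The four-term sequence is correct only over $\mathbf{P}^1$. By dropping the point at infinity and keeping the corestriction term you have manufactured subgroups $C_k\subset R_k$ and $C_F\subset R_F$ that should not be there, and this is what forces you into the delicate "match the Grunwald--Wang obstruction against the reciprocity relation" step --- which you then only sketch as a plan ("this is where I expect the main difficulty", "the point that must be pushed through is...") without carrying it out. As it stands the key surjectivity is not proved.

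Second, the difficulty you are bracing for does not actually arise, and recognizing this is what makes the lemma short. The special case of Grunwald--Wang obstructs realizing prescribed local characters by a global character \emph{of the same order}, i.e.\ surjectivity of $H^1(E,\Z/n)\to\prod_{w\in S}H^1(E_w,\Z/n)$; with $\Q/\Z$ coefficients the localization map $H^1(E,\Q/\Z)\to\prod_{w\in S}H^1(E_w,\Q/\Z)$ at a finite set $S$ of places is surjective with no exception (this is the form cited in the paper from Artin--Tate, Ch.~X, Thm.~5). So the correct route is: use the exact sequence $0\to\Br k\to\Br k(t)\to\bigoplus_{P\in(\A^1_k)^{(1)}}H^1(\kappa(P),\Q/\Z)\to 0$ (Gille--Szamuely, Cor.~6.4.6) and its analogue over $F$; the right-hand vertical map is surjective componentwise by the unconditional $\Q/\Z$-version of Grunwald--Wang applied to each residue field $E=\kappa(P)$ and the places of $E$ above $v$ (your identification $E\otimes_kF\cong\prod_{w\mid v}E^h_w$ and $H^1(E^h_w,\Q/\Z)=H^1(E_w,\Q/\Z)$ is correct and is exactly what is needed here); the left-hand vertical map is surjective by class field theory; and a diagram chase concludes. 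No auxiliary closed point and no analysis of the corestriction relation is needed.
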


  \begin{proof}
  Notons $E=k_{v}^h$. Les suites exactes de Faddeev 
(\cite{gillesza}, Cor. 6.4.6.)
  pour le groupe de Brauer de 
  $k(t)$ et le groupe de Brauer de $E(t)$ sont compatibles. On a donc le diagramme commutatif 
  de suites exactes

  {\small
\begin{equation} \label{henselbrauer}
\begin{CD} 
0 @>>> \Br k @>>> \Br  k(t)     @>>> \bigoplus_{x \in {\A^1_{k}}^{(1)}} H^1(k(x),\Q/\Z) @>>> 0 
\cr
&& @VVV @VVV @VVV   \cr
0 @>>> \Br E @>>> \Br  E(t)     @>>> \bigoplus_{{x \in \A^1_{k}}^{(1)}} [\oplus_{y \in {\A^1_{E}}^{(1)},  \hskip1mm y\mapsto x} H^1(E(y),\Q/\Z)] @>>> 0. 
 \end{CD}
\end{equation}
}

Pour $L$ un corps de nombres et $S$ un ensemble fini de places de $L$,
l'application de restriction 
$$H^1(L,\Q/\Z) \to \prod_{w \in S_L} H^1(L_w,\Q/\Z)$$
est surjective. Ceci est une cons\'equence du th\'eor\`eme de Grunwald-Wang
\cite[Chap.~10, Thm.~5, p.~103]{AT}.
Il en est donc de m\^eme de l'application de restriction
$$H^1(L,\Q/\Z) \to \prod_{w \in S_L} H^1(L_w^h,\Q/\Z).$$
En appliquant ceci \`a chaque $k(x)$ et \`a $S$ l'ensemble des places
de $k(x)$ au-dessus de la place $v$ de $k$, on voit que la fl\`eche verticale
de droite dans le diagramme ci-dessus est surjective.

Pour $L$ un corps de nombres et $w$ une place de $L$, 
les applications naturelles de restriction $H^1(L_{w}^h,\Q/\Z) \to H^1(L_{w},\Q/\Z)$
et $\Br L_{w}^h \to \Br L_{w}$ sont des isomorphismes.
La th\'eorie du corps de classes montre que l'application  $\Br L \to \Br L_{w} $ est  surjective.
Il en est donc de m\^eme de
l'application de restriction $\Br L \to \Br L_{w}^h$.
La fl\`eche verticale de gauche dans le diagramme commutatif ci-dessus
est donc surjective.

Il en r\'esulte que la fl\`eche verticale m\'ediane est surjective.
  \end{proof}

Pour traiter les probl\`emes li\'es \`a la place $v_0$,
nous aurons besoin d'une version fine du th\'eor\`eme d'approximation 
forte, qui est la proposition \ref{appforteHilbertraf} ci-dessous.

\smallskip

Nous utiliserons une version du th\'eor\`eme d'irr\'eductibilit\'e de Hilbert
\'etablie  par Serre \cite[p. 135/136]{serre}
comme une cons\'equence du th\'eor\`eme de Siegel sur la
 finitude des points entiers des courbes.

 \begin{prop} \label{hilbsiegel}
 Soit $k$ un corps de nombres. Soit $\alpha \in k$ et $T$ un ensemble fini de places de $k$.
 Soit $k_{\alpha,T}$ l'ensemble des $t \in k$ tels que $t-\alpha$ soit une unit\'e en dehors de $T$.
 Soit $\Omega$ un ensemble mince dans $k$. Pour $\alpha$ en dehors d'un ensemble fini
 (d\'ependant de $\Omega$),  l'ensemble $\Omega \cap k_{\alpha,T}$ est fini.
\end{prop}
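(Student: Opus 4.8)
The plan is to reduce the statement to Siegel's theorem on the finiteness of integral points of curves. Recall that a thin set $\Omega \subseteq \A^1(k) = k$ is by definition contained in a finite union of subsets of two types: type (I), the $k$-points lying over a closed subscheme of $\A^1_k$ of dimension $0$, which is a finite set of closed points and thus contributes only finitely many elements of $k$; and type (II), the image $\pi(W(k))$ of a dominant $k$-morphism $\pi : W \to \A^1_k$ with $W$ an irreducible $k$-curve and $\deg \pi = d \geq 2$. Since a finite union of finite sets is finite, it suffices to treat one piece of type (II): I will show that for every $\alpha$ outside a finite set $B_W$ depending only on $\pi : W \to \A^1_k$, the set $\{P \in W(k) : \pi(P) \in k_{\alpha,T}\}$ is finite. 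Taking for the exceptional set the union of the finitely many $B_W$ attached to the pieces of $\Omega$ then yields the proposition.

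First I would normalize: replacing $W$ by its normalization and then passing to the smooth projective model $\ov W$ of its function field, I extend $\pi$ to a finite morphism $\phi : \ov W \to \bP^1_k$ of degree $d$. Let $k_W = \k \cap k(W)$ be the field of constants. If $k_W \neq k$, then $\ov W$ has no $k$-point (a $k$-point would furnish a $k$-embedding $k_W \hookrightarrow k$, forcing $k_W = k$), so $W(k)$ is contained in the finite non-smooth locus and $\pi(W(k))$ is finite; this case is trivial. I may therefore assume $\ov W$ is a smooth projective geometrically connected $k$-curve and that $\phi \in k(\ov W)$ is nonconstant of degree $d \geq 2$.

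Next, fixing $\alpha \in k$, set $g = \phi - \alpha \in k(\ov W)$. The condition $\pi(P) \in k_{\alpha,T}$ says exactly that $g(P) = \pi(P) - \alpha$ lies in $\calo_T^{\times}$, that is, that both $g(P)$ and $g(P)^{-1}$ are $T$-integers; equivalently, $P$ is a $T$-integral point of the affine curve $U_\alpha := \ov W \setminus D_\alpha$, where $D_\alpha$ is the reduced divisor $\phi^{-1}(\alpha) \cup \phi^{-1}(\infty)$ (the two parts being disjoint since $\alpha \neq \infty$). By the Siegel--Mahler theorem, $U_\alpha$ has only finitely many $T$-integral $k$-points as soon as $2\,g(\ov W) - 2 + \#(D_\alpha)_{\k} > 0$, i.e. as soon as $D_\alpha$ has at least three geometric points when $\ov W$ has genus $0$, and at least one when the genus is positive.

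The crux is then to control $\#(D_\alpha)_{\k}$. The fibre $\phi^{-1}(\infty)$ always contains at least one geometric point, while $\phi^{-1}(\alpha)$ consists of exactly $d \geq 2$ distinct geometric points provided $\alpha$ is not a branch value of $\phi$ over $\k$. Let $B_W \subset k$ be the finite set of those $\alpha \in k$ which are branch values of $\phi$ over $\k$. For $\alpha \notin B_W$ one gets $\#(D_\alpha)_{\k} \geq d + 1 \geq 3$, so Siegel applies unconditionally and the set in question is finite. I expect the only genuine obstacle to be the degenerate genus-$0$ situation: when $\alpha$ is a branch value, $\phi^{-1}(\alpha)$ and $\phi^{-1}(\infty)$ may each reduce to a single totally ramified point, so that $U_\alpha \cong \G_m$ over $\k$ and the $T$-units supply infinitely many solutions — which is precisely why finitely many values of $\alpha$ must be excluded, and why the statement is phrased for $\alpha$ outside a finite set. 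Assembling the finite exceptional sets over the finitely many pieces of $\Omega$ completes the proof.
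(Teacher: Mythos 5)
Your proof is correct and follows exactly the route the paper intends: the paper gives no argument of its own, deferring to Serre \cite[pp.~135--136, Exercice 1]{serre}, where the statement is deduced from Siegel's theorem on integral points of curves precisely as you do --- pass to the smooth projective model, discard the finitely many branch values of $\phi$ so that $\phi^{-1}(\alpha)\cup\phi^{-1}(\infty)$ has at least three geometric points, and invoke Siegel--Mahler for the points where $\phi-\alpha$ is a $T$-unit. The only detail worth spelling out in a final write-up is the (standard, harmless) identification of the condition $\phi(P)-\alpha\in\calo_T^{\times}$ with $T$-integrality of $P$ on a model of $U_\alpha$ --- for instance by applying Siegel directly to the function $(\phi-\alpha)+(\phi-\alpha)^{-1}$, whose polar divisor is supported exactly on $D_\alpha$, which avoids any discussion of models.
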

 
Serre \'etablit cette proposition
dans le cas $k=\Q$, et laisse l'\'enonc\'e sur un corps de nombres quelconque
 au lecteur (op. cit., Exercice 1 p. 136). Pour la d\'efinition et les 
propri\'et\'es des ensembles minces, voir le paragraphe~9.1. de 
\cite{serre}.

\begin{prop}\label{appforteHilbertraf}
Soient $k$ un corps de nombres, $k_{i}/k$, $i=1,\dots,n$ des extensions finies,
et pour chaque $i$, $e_{i } \in k_{i}$.
 Soit $N>0$ un entier.

On se donne un ensemble fini $S$ de places de $k$ et une place $v_{0}$ 
hors de $S$.
On suppose qu'il existe au moins une place non archim\'edienne dans $S \cup \{v_{0}\}$.

On se donne des $\lambda_{v} \in k_{v}$ pour $v \in S$, et des  
polyn\^omes irr\'eductibles $P_{j}(z,t) \in k[z,t]$.

Alors il existe  $\theta \in k$
 arbitrairement proche de chaque $\lambda_{v}$ pour $v \in S$,
 avec les propri\'et\'es suivantes :

-- $\theta $  est entier hors de $S \cup \{v_{0}\}$;

--   chaque   $\theta - e_{i}$ est une puissance $N$-i\`eme dans
tout compl\'et\'e de $k_{i}$ en une place   au-dessus de $v_{0}$;

-- chaque $P_{j}(z,\theta)$ est irr\'eductible dans $k[t]$.
\end{prop}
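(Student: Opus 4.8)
\emph{Mise en place locale.} Posons $T=S\cup\{v_0\}$. La première étape est de traduire la condition en $v_0$ en une condition ouverte. Pour chaque $i$ et chaque place $w$ de $k_i$ au-dessus de $v_0$, le sous-groupe $(k_{i,w}^{\times})^N$ est ouvert d'indice fini dans $k_{i,w}^{\times}$; l'ensemble
$$U_{v_0}=\{x\in k_{v_0}\ :\ x-e_i\in (k_{i,w}^{\times})^N \text{ pour tout } i \text{ et tout } w\mid v_0\}$$
est donc ouvert, et il est non vide: tout $x\in k_{v_0}$ qui est une puissance $N$-ième de valeur absolue assez grande convient, car $x-e_i=x\,(1-e_i/x)$ est alors le produit d'une puissance $N$-ième par un élément proche de $1$. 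On retiendra surtout que $U_{v_0}$ contient des éléments de valeur absolue arbitrairement grande. La proximité aux $\lambda_v$ $(v\in S)$ et l'appartenance à $U_{v_0}$ sont des conditions ouvertes aux places de $T$; comme $T$ contient une place non archimédienne, $\calo_T$ est dense dans $\prod_{v\in T}k_v$, et l'on peut choisir $\alpha\in k$, entier hors de $T$, proche de chaque $\lambda_v$ pour $v\in S$ et appartenant à $U_{v_0}$. On impose de plus à $\alpha$ d'éviter l'ensemble fini exceptionnel attaché ci-dessous à la proposition~\ref{hilbsiegel}, ce qui est licite puisque cet ensemble est fini et les conditions précédentes ouvertes.

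\emph{Famille d'unités.} Soit $S'=T\cup\{\text{places archimédiennes de }k\}$. Le groupe des $S'$-unités $\calo_{S'}^{\times}$ est de rang $|S'|-1\geq 1$ (on réutilise l'existence d'une place non archimédienne dans $T$). Comme $S\subsetneq S'$ (car $v_0\in S'\setminus S$), le plongement logarithmique fournit une unité $\eta\in\calo_{S'}^{\times}$ d'ordre infini avec $|\eta|_v<1$ pour toute $v\in S$, que l'on peut de plus choisir telle que $|\eta|_{v_0}\neq 1$. Posons $\theta_m=\alpha+\eta^m$. Alors $\theta_m-\alpha=\eta^m$ est une unité hors de $T$, donc $\theta_m\in k_{\alpha,T}$, et $\theta_m$ est entier hors de $T$; les $\theta_m$ sont deux à deux distincts. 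En chaque $v\in S$ on a $\eta^m\to 0$, donc $\theta_m\to\alpha$ et $\theta_m$ est aussi proche de $\lambda_v$ que voulu pour $m$ grand. Pour la place $v_0$: si $|\eta|_{v_0}<1$, alors $\theta_m\to\alpha\in U_{v_0}$, donc $\theta_m\in U_{v_0}$ pour $m$ grand (ouverture de $U_{v_0}$); si $|\eta|_{v_0}>1$ (c'est $v_0$ qui absorbe la contrainte de la formule du produit), on restreint $m$ à la progression arithmétique sur laquelle $\eta^m$ est une puissance $N$-ième dans chaque $k_{i,w}$ — condition de congruence modulo l'ordre fini de $\eta$ dans $k_{i,w}^{\times}/(k_{i,w}^{\times})^N$ — de sorte que $\theta_m-e_i=\eta^m\,(1+(\alpha-e_i)/\eta^m)$ soit une puissance $N$-ième pour $m$ grand dans cette progression, d'où $\theta_m\in U_{v_0}$. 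Dans tous les cas on dispose d'une infinité de $\theta_m$ distincts dans $k_{\alpha,T}$ vérifiant les trois premières propriétés demandées.

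\emph{Irréductibilité de Hilbert.} Chaque $P_j(z,t)$ étant irréductible dans $k[z,t]$, l'ensemble $\Omega$ des $\theta\in k$ pour lesquels l'un des $P_j(z,\theta)$ est réductible dans $k[z]$ est un ensemble \emph{mince}. Par la proposition~\ref{hilbsiegel}, appliquée à notre choix générique de $\alpha$ et à l'ensemble fini de places $T$, l'intersection $\Omega\cap k_{\alpha,T}$ est finie. Comme les $\theta_m$ sont dans $k_{\alpha,T}$, deux à deux distincts et en nombre infini, tous sauf un nombre fini évitent $\Omega$; pour un tel indice $m$ (grand, et dans la progression le cas échéant), l'élément $\theta=\theta_m$ possède simultanément les quatre propriétés requises.

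\emph{Point délicat.} La difficulté essentielle est de concilier la structure de translaté par une unité — indispensable pour pouvoir invoquer la proposition~\ref{hilbsiegel} — avec la condition imposée \emph{a priori} en $v_0$. On la surmonte en ménageant assez de souplesse dans $U_{v_0}$ (présence de puissances $N$-ièmes de valeur absolue arbitrairement grande) et en choisissant l'unité $\eta$ contractante aux places de $S$, ce qui permet à $v_0$ (ou à une place archimédienne hors de $T$) d'absorber la masse dictée par la formule du produit; sans cette marge, la proximité simultanée aux $\lambda_v$ aux places de $S$ et dans $U_{v_0}$ en $v_0$ serait incompatible avec le fait que $\theta-\alpha$ soit une unité hors de $T$. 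On notera que l'hypothèse d'existence d'une place non archimédienne dans $S\cup\{v_0\}$ sert deux fois: pour la densité de $\calo_T$ fournissant $\alpha$, et pour la positivité du rang des $S'$-unités fournissant $\eta$.
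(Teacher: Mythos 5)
Votre strat\'egie est pour l'essentiel celle de l'article (approximation forte usuelle pour un premier \'el\'ement, perturbation par des \'el\'ements qui sont des unit\'es hors de $T$ et petits aux places de $S$, puis la proposition~\ref{hilbsiegel} appliqu\'ee \`a $k_{\alpha,T}$), mais votre premi\`ere \'etape contient une affirmation fausse, et ce pr\'ecis\'ement au point d\'elicat que la proposition doit r\'esoudre. Vous affirmez que, $T=S\cup\{v_0\}$ contenant une place non archim\'edienne, l'anneau $\calo_T$ est dense dans $\prod_{v\in T}k_v$, ce qui vous permettrait de choisir $\alpha$ entier hors de $T$, proche des $\lambda_v$ pour $v\in S$ \emph{et} dans $U_{v_0}$. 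C'est inexact d\`es que $T$ contient toutes les places archim\'ediennes de $k$ --- cas qui est exactement celui de l'application dans la d\'emonstration du th\'eor\`eme~\ref{thprincipal}, o\`u $S$ contient les places archim\'ediennes. Par exemple pour $k=\Q$ et $T=\{\infty,p\}$, on a $\calo_T=\Z[1/p]$, qui est discret et cocompact dans $\R\times\Q_p$ (un \'el\'ement de $\Z[1/p]$ de valuation $p$-adique positive est un entier rationnel, donc nul s'il est de valeur absolue r\'eelle $<1$). Le crit\`ere pertinent n'est pas la pr\'esence d'une place non archim\'edienne dans $T$, mais l'existence d'une place hors de $T$ laiss\'ee libre : l'approximation forte exige d'omettre une place, or vos conditions en prescrivent une \`a chaque place de $k$. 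Votre remarque finale, qui attribue \`a l'hypoth\`ese d'une place non archim\'edienne dans $S\cup\{v_0\}$ le r\^ole de garantir cette densit\'e, aggrave la confusion.

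La suite de votre argument montre toutefois que l'erreur est r\'eparable sans changer d'architecture. Lorsque $T$ contient toutes les places archim\'ediennes, la formule du produit force $|\eta|_{v_0}>1$ pour toute unit\'e $\eta$ contractante aux places de $S$, et votre seconde branche (celle o\`u $\theta_m-e_i=\eta^m\bigl(1+(\alpha-e_i)\eta^{-m}\bigr)$, avec $m$ dans une progression arithm\'etique rendant $\eta^m$ puissance $N$-i\`eme aux places au-dessus de $v_0$) n'utilise nullement $\alpha\in U_{v_0}$. Il suffit donc de ne demander \`a $\alpha$ que l'approximation aux places de $S$ et l'int\'egralit\'e hors de $S\cup\{v_0\}$ (la place $v_0$ restant libre : c'est l'approximation forte usuelle, comme pour le $\theta_1$ de l'article), de choisir syst\'ematiquement $\eta$ avec $|\eta|_v<1$ pour $v\in S$ et $|\eta|_{v_0}>1$ (toujours possible par le th\'eor\`eme des unit\'es), et de traiter la condition en $v_0$ uniquement via $\eta^m$; l'ensemble $U_{v_0}$ et la premi\`ere branche deviennent alors superflus. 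C'est exactement ce que fait l'article avec ses perturbations explicites $u^{Ns}(\prod_l l)^{Nr}$, $(\prod_l l)^{Nr}$ ou $(\prod_l l)^{Nr}/p^{Ns}$; votre perturbation $\eta^m$ assortie d'une condition de congruence sur $m$ en est une variante un peu plus uniforme. Telle quelle, en revanche, votre r\'edaction repose sur un \'enonc\'e de densit\'e faux.
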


\begin{proof}
Soit $\Omega$ l'ensemble mince form\'e des $\theta \in k$
pour lesquels l'un des $P_{j}(z,\theta) $ est r\'eductible.
Notons $S_{fini}$ l'ensemble des places finies de $S$.

On commence par appliquer l'approximation forte
usuelle \`a l'ensemble des $\lambda_{v}$ pour $v \in S$, ce
qui donne un $\theta_{1} \in k$, entier hors de $S$ et $v_{0}$,
et qui est 
tr\`es proche de $\lambda_{v}$ pour $v \in S$.

Soit $T = S_{fini} \cup \{v_{0}\}$. D'apr\`es la proposition
\ref{hilbsiegel}
on peut de plus  choisir $\theta_{1}$  tel que $\Omega \cap k_{\theta_{1}, T}$
est fini.

Si $v_{0}$ est une place archim\'edienne et $k$ contient $s-1$ autres 
places archim\'ediennes $v_1,...,v_{s-1}$ (avec $s >1$),
on note $u$ une unit\'e de l'anneau des entiers de $k$ de 
valeur absolue strictement plus petite que $1$ en les places
archim\'ediennes autres que $v_{0}$; l'existence d'une telle unit\'e 
est assur\'ee par le fait que l'image des unit\'es de $\calo_k$ dans $\RR^s$
via l'application 
$$x \mapsto (\log (\mid x \mid_{v_0}), ...,\log (\mid x \mid_{v_{s-1}}))$$
est un r\'eseau $L$ tel que le $\RR$-espace vectoriel 
engendr\'e par $L$ est l'hyperplan $\sum_{i=0} ^{s-1} x_i=0$ dans $\RR^s$,
via le th\'eor\`eme des unit\'es de Dirichlet 
(cf. \cite{cassfro}, expos\'e II, paragraphe 18). 
On peut de plus
choisir $u$ positive en $v_{0}$
 si $v_{0}$ est une place r\'eelle.
On cherche
$\theta$ de la forme
$$\theta = \theta_{1} + u^{Ns}.( \prod_{l \in S_{fini}} l)^{Nr},$$
o\`u $l$ est le premier de  $\Q$ induit par $v \in S_{fini}$,
avec   $r>0$ suffisamment grand, {\it puis} $s>0$ suffisamment grand,
 pour que les approximations des $\lambda_{v}$ pour $v \in S$
soient respect\'ees, et que pour tout $i$,  $\theta-e_{i}$ soit positif aux compl\'et\'es
 r\'eels de $k_{i}$ au-dessus de la place $v_{0}$ (noter que comme $s >1$,
on a $\mid u \mid_{v_0} >1$ par la formule du produit).
 Il existe une infinit\'e de tels $\theta$.

\smallskip

Si $v_0$ est l'unique place archim\'edienne de $k$, alors $S_{fini}$ 
est non vide par hypoth\`ese, et on prend $\theta$ de la forme 
$$\theta = \theta_{1} + ( \prod_{l \in S_{fini}} l)^{Nr},$$
avec $r >0$ suffisamment grand comme ci-dessus, ce qui donne encore 
une infinit\'e de $\theta$.

 \medskip

Si enfin $v_{0}$ est une place non archim\'edienne de $k$,  
induisant le premier $p$ sur $\Q$,
on cherche   $\theta$ de la forme
$$\theta = \theta_{1} + ( \prod_{l \in S_{fini}} l)^{Nr} / p^{Ns}$$
avec
 $r>0$ suffisamment grand pour que les approximations des $\lambda_{v}$ pour $v \in S_{fini}$
soient respect\'ees, {\it puis} $s>0 $ suffisamment grand pour que :

-- l'approximation des $\lambda_{v}$ pour  les places $v$ archim\'ediennes soit  respect\'ee;

--  pour chaque $i$ la valuation $p$-adique de
$$( \theta_{1}-e_{i}) p^{Ns} $$ soit suffisamment grande pour que
$$(\theta_{1}-e_{i}) p^{Ns}  +    (\prod_{l \in S_{fini}} l)^{Nr} $$
soit une puissance  $N$-i\`eme dans tout compl\'et\'e   de $k_{i}$ en une place  
au-dessus de la place $v_{0}$, ce qui entra\^{\i}ne que
  $$\theta - e_{i}= \theta_{1} - e_{i}  + ( \prod_{l \in S_{fini}} l)^{Nr} / p^{Ns}
$$
l'est aussi.
Il existe une infinit\'e de tels $\theta$.

\medskip

 On choisit alors $\theta$  qui  ne soit pas dans l'ensemble fini $\Omega \cap k_{\theta_{1},T}$, donc pas dans $\Omega$,
si bien que chaque $P_{j}(z,\theta)$ est irr\'eductible dans $\Q[t]$.

\end{proof}

Sur un corps $k$ quelconque, un $k$-groupe r\'eductif $G$  est dit isotrope s'il
existe un $k$-homomorphisme non constant $\G_{m,k} \to  G$.
Sur un corps local $k$, un $k$-groupe r\'eductif $G$ est isotrope
si et seulement si l'espace topologique $G(k)$ est non compact
(crit\`ere de Godement, Bruhat et Tits, cf. Prasad  \cite{prasad}).

\medskip

Nous sommes maintenant \`a pied d'oeuvre pour \'etablir le th\'eor\`eme principal
de cet article.

 \begin{theo} \label{thprincipal}
 Soit $k$ un corps de nombres. Soient $X$ une $k$-vari\'et\'e 
 lisse et g\'eom\'etri\-que\-ment int\`egre
  et $f :  X \to \A^1_k$ un $k$-morphisme.
 Supposons $X(\A_{k}) \neq \emptyset$.
 Soit $K=k(\A^1)$ et soit $G$ un $K$-groupe semisimple simplement connexe,
 absolument presque $K$-simple.
  
  On suppose :

(i) La fibre g\'en\'erique $X_{\eta}/K$ de $f$ est un espace homog\`ene de $G$ \`a
stabilisateurs r\'eductifs connexes.
 
(ii) Toutes les  fibres de $f$ sont scind\'ees.

(iii) Il existe une place $v_0$ de $k$ telle que~: la fibre g\'en\'erique 
$X_{\eta}$ poss\`ede un $k_{v_0}(t)$-point et il existe un ouvert de 
Zariski $V$ de $ \A^1 _k$ tel que
 la sp\'ecialisation $G_{x}/k_{v_{0}}$ 
du $k(t)$-groupe $G$ en tout point  
 $x \in  V(k_{v_0})$ soit d\'efinie 
 et soit semi-simple, simplement connexe, isotrope.
 
(iv) Pour tout \'el\'ement   $\alpha \in \Br X$,  l'application d'\'evaluation
${\rm ev}_{\alpha} :  X(k_{v_{0}}) \to \Br k_{v_{0}}$ est constante.

\smallskip

Alors l'image diagonale de $X(k)$ dans la projection de $X(\A_k)^{\Br X}$
sur $X(\A_k^{v_0})$ est dense.

 \end{theo}

\begin{proof}
Commen\c cons par \'etablir le lemme suivant.
\begin{lem} Sous les hypoth\`eses du th\'eor\`eme, les groupes  $\Br X_{\eta}/\Br K$  et  $\Br X/\Br k$ sont finis.
\end{lem}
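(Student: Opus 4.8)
Le plan est de traiter séparément les deux finitudes, la seconde se déduisant de la première par un argument de fibration.

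La finitude de $\Br X_{\eta}/\Br K$ s'obtient à partir des ingrédients géométriques déjà en place. Sur $\ovK$, la suite \eqref{3etoiles} montre (preuve de la proposition~\ref{stricthenselien}) que $\Br X_{\ovK}$ est isomorphe au groupe fini $\hat{\mu}(\ovK)$, tandis que $\pic X_{\ovK} \oi \widehat{T}(\ovK)$ est un réseau dont l'action galoisienne se factorise par un groupe fini $\Delta=\Gal(L/K)$. La suite spectrale de Hochschild--Serre fournit alors (première ligne du diagramme de la proposition~\ref{tresgranddiagramme}, dont les hypothèses cohomologiques sont satisfaites pour $K=k(t)$ d'après la remarque~\ref{surk(t)}, celle de Rosenlicht l'étant car $G$ est semi-simple) une suite exacte
$$ 0 \to H^1(K,\pic X_{\ovK}) \to \Br X_{\eta}/\Br K \to [\Br X_{\ovK}]^{\Ga_K}. $$
Le terme de droite est fini car $\Br X_{\ovK}$ l'est; le terme de gauche s'identifie à $H^1(\Delta,\pic X_{L})$, cohomologie d'un groupe fini à coefficients de type fini, donc est fini. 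Ainsi $\Br X_{\eta}/\Br K$ est fini, et seule l'hypothèse (i) a servi.

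Pour en déduire la finitude de $\Br X/\Br k$, je pars de l'injection $\Br X \hookrightarrow \Br X_{\eta}$ : les deux groupes se plongent de façon compatible dans $\Br k(X)$ (régularité de $X$ et de $X_{\eta}$, qui ont même corps des fonctions), et la restriction est donc injective. Je considère la composée $\phi : \Br X \hookrightarrow \Br X_{\eta} \to \Br X_{\eta}/\Br K$, dont l'image est un sous-groupe du groupe fini $\Br X_{\eta}/\Br K$, donc finie. Il reste à vérifier que le noyau de $\phi$ est exactement l'image de $\Br k$ : ce noyau est formé des $\alpha \in \Br X$ dont la restriction à $X_{\eta}$ est de la forme $f^{*}\beta$ avec $\beta \in \Br K=\Br k(t)$, et montrer que $\beta$ est nécessairement dans $\Br k$ donnera l'injection $\Br X/\Br k \hookrightarrow \Br X_{\eta}/\Br K$, d'où le résultat.

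Le point crucial, et la principale difficulté, est ce calcul de résidus, qui utilise de façon essentielle l'hypothèse (ii) de scindage des fibres. Soit $\alpha$ dans le noyau, avec $\alpha|_{X_{\eta}}=f^{*}\beta$ dans $\Br k(X)$. Pour tout point fermé $x \in \A^1_k$, l'hypothèse (ii) fournit une composante $Z$ de la fibre $X_x$ qui est de multiplicité un et géométriquement intègre sur $k(x)$; son point générique $\xi$ est un point de codimension un de $X$, l'indice de ramification de $f$ en $\xi$ vaut $1$ (car $Z$ est de multiplicité un), et $k(x)$ est algébriquement clos dans $k(\xi)$ (intégrité géométrique, en caractéristique zéro). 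La fonctorialité des résidus donne alors $\partial_{\xi}(f^{*}\beta)= \mathrm{res}^{*}(\partial_{x}\beta)$, où $\mathrm{res}^{*} : H^1(k(x),\Q/\Z) \to H^1(k(\xi),\Q/\Z)$ est injective. Comme $\alpha \in \Br X$ est non ramifié au point $\xi \in X^{(1)}$ du schéma régulier $X$, on a $\partial_{\xi}(f^{*}\beta)=\partial_{\xi}(\alpha)=0$, d'où $\partial_x \beta=0$. Ceci valant pour tout point fermé $x$, la suite exacte de Faddeev pour $\A^1_k$ (celle de la ligne supérieure de \eqref{henselbrauer}) force $\beta \in \Br k$. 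Par injectivité de $\Br X \hookrightarrow \Br X_{\eta}$, on conclut $\alpha=f^{*}\beta \in \mathrm{Im}(\Br k \to \Br X)$, ce qui achève d'identifier le noyau de $\phi$ et établit la finitude de $\Br X/\Br k$.
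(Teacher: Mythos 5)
Your proof is correct and takes essentially the same route as the paper's: finiteness of $\Br X_{\eta}/\Br K$ from the exact sequence of Proposition~\ref{tresgranddiagramme} (via Proposition~\ref{stricthenselien} and Remarque~\ref{surk(t)}), then the injection $\Br X/\Br k \hookrightarrow \Br X_{\eta}/\Br K$ obtained by showing, through hypothesis (ii) and the Faddeev sequence, that an element of $\Br K$ whose image lies in $\Br X$ comes from $\Br k$. You merely spell out the residue computation (multiplicity one, $k(x)$ algebraically closed in $k(\xi)$, injectivity of the restriction on $H^1(\cdot,\Q/\Z)$) that the paper leaves implicit.
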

\begin{proof}
D'apr\`es la proposition  \ref{stricthenselien},  sous l'hypoth\`ese (i), d'une part  le module galoisien $\Pic X_{\ov K}$ 
est un r\'eseau,
donc $H^1(K,\Pic X_{\ov K})$ est fini, d'autre part
  le groupe $\Br X_{\ov K}$ est fini. D'apr\`es la proposition  \ref{tresgranddiagramme} et  la remarque  \ref{surk(t)},
  ceci implique que
le groupe $\Br X_{\eta}/\Br K$ est fini. Soit $\alpha \in \Br K$ dont l'image $\beta$
dans $\Br X_{\eta}$ appartient au sous-groupe
$\Br X$.  Les r\'esidus de $\beta$ aux points de codimension 1 de $X$ sont nuls.
L'hypoth\`ese (ii) assure alors que les r\'esidus de $\alpha$ aux points de codimension 1
de $\A^1_{k}$ sont nuls. Ceci implique que $\alpha$ appartient \`a l'image de $\Br k$ dans $\Br K$.
Comme $\Br X_{\eta}/\Br K$ est fini, on en d\'eduit que $\Br X/\Br k$ est fini.
\end{proof}

Il existe  un ensemble fini $S$
de places de $k$
contenant $v_0$ et les places archim\'ediennes et un $\calo_S$-sch\'ema 
lisse de type fini $\X$ \'equip\'e d'un $\calo_S$-morphisme $\X \to \A^1_{\calo_S}$
qui \'etend $f : X \to \A^1_{k}$.

Pour chaque place $v \in S \setminus \{v_{0}\}$, on se donne un ouvert $W_{v} \subset X(k_{v})$.
On suppose que l'ensemble
$$E:=[X(k_{v_{0}})  
\times \prod_{v \in S \setminus \{v_{0}  \} }   W_{v} \times \prod_{v \notin S} \X(O_{v})]^{\Br X}$$
est non vide. 
On veut montrer que l'ensemble
$$X(k_{v_{0}})  
\times \prod_{v \in S \setminus \{v_{0}  \} }   W_{v} \times \prod_{v \notin S} \X(O_{v})$$
 contient un point de $X(k)$.

Notons que chaque $\X(O_{v}) \subset X(k_{v})$ est un ouvert non vide pour 
la topologie $v$-adique.  
Pour \'etablir l'\'enonc\'e ci-dessus, on peut donc remplacer $S$ par tout ensemble fini de places le contenant.

\smallskip

Choisissons un sous-ensemble fini   
$\Ga$ de $\Br X_{\eta}$ dont l'image modulo $\Br K$ est 
le groupe fini $\Br X_{\eta}/\Br K$. 

Par hypoth\`ese, on dispose ici 
d'un $k_{v_0}(t)$-point de la $K$-vari\'et\'e $X_{\eta}$,
et donc (Lemme \ref{florence}) d'un   $k_{v_0}^h(t)$-point $s_{v_{0}}(t)$ de la $K$-vari\'et\'e $X_{\eta}$. 
D'apr\`es le 
lemme~\ref{surjhenselkt}, quitte \`a modifier chaque \'el\'ement de $\Ga$ par 
un \'el\'ement de $\Br K$, on peut imposer
$$\alpha(s_{v_0}(t))=0 \in \Br k_{v_0}(t)$$
pour tout $\alpha \in \Ga$. Ceci vaut encore si on remplace 
$\Ga \subset \Br X_{\eta}$ par le sous-groupe qu'il engendre, 
sous-groupe qui est fini puisque $\Br X_{\eta}$ est de torsion. On note 
ce sous-groupe d\'esormais $B$. 

\smallskip

Une fois fix\'e le groupe fini $B$, il existe un ouvert non vide 
$U_{0} \subset \A^1_{k}$
tel que les \'el\'ements de $B$ soient non ramifi\'es sur $U:=f^{-1}(U_{0})$.
En d'autres termes, on  a $B \subset \Br U$. 
Quitte \`a restreindre $U_{0}$, on peut supposer que $f : U \to U_{0}$
est lisse \`a  fibres g\'eom\'etriquement int\`egres, que le groupe $G/K$ s'\'etend
en un $U_{0}$-sch\'ema en groupes semisimples $G_0$, que $U \to U_{0}$
est un $G_{0}$-espace homog\`ene, et que de plus $s_{v_{0}}(t)$ d\'efinit une 
section de la restriction de $f$ au-dessus de $U_{0} \times_k k^h_{v_{0}}$. 

\smallskip

Notons $\{m_{1}, \dots, m_{l} \} \in \A^1 _k$ les points ferm\'es dans le compl\'ementaire
de $U_{0}$.

Pour tout tel point $m_{i}$, on note $k_{i}$ le corps r\'esiduel en $m_{i}$.
Pour tout $i$, d'apr\`es l'hypoth\`ese (ii), il existe au moins
une composante g\'eom\'etriquement int\`egre de multiplicit\'e 1 
de la fibre $X_{m_{i}}$.
 On fixe  un ouvert lisse non vide $W_{i}$ d'une telle composante.
On note $K_{i}=k_{i}(W_{i})$.
Soit $K'_{i}/K_{i}$ une extension ab\'elienne finie telle que
tous les r\'esidus des \'el\'ements de $B$ au point g\'en\'erique 
de $W_{i}$ appartiennent \`a $H^1(K'_{i}/K_{i}, \Q/\Z)$.
Quitte \`a restreindre $W_{i}$, on peut supposer que la fermeture
int\'egrale $W'_{i}$ de $W_{i}$ dans $K'_{i}$ est finie \'etale sur $W_{i}$.
Soit $k'_{i}$ la fermeture int\'egrale de $k_{i}$ dans $K'_{i}$.

On agrandit $S$ de fa\c con \`a  avoir les propri\'et\'es suivantes :

(a) Le $k$-morphisme $f : U \to U_{0}$ s'\'etend en un $\calo_S$-morphisme lisse
$\U \to \U_{0}$ de $\calo_S$-sch\'emas, avec $\U \subset \X_{\calo_S}$,  tel que pour tout point 
ferm\'e  $x \in \U_{0}$, de corps r\'esiduel le corps fini $F(x)$,  la fibre $f^{-1}(x)$ poss\`ede un $F(x)$-point. 
Comme les fibres de $U \to U_{0}$ sont g\'eom\'etriquement int\`egres,
ceci est possible par les estim\'ees de Lang--Weil (cf. \cite{skoro},
th\'eor\`eme 1, \'etape 3). On peut aussi supposer $\U_{0} \to \Spec \calo_{S}$ surjectif,
ce qui assure $\U(O_{v}) \neq \emptyset$ pour $v \notin S$.

(b) Les polyn\^omes unitaires $P_{i}(t)\in k[t]$ d\'efinissant les points $m_{i}$
sont $\calo_S$-entiers, et le produit $\prod_{i}P_{i}(t)$ a une r\'eduction s\'eparable
en toute place finie  $v \notin S$. Notons ${\bf m}_{i}=\Spec \calo_S[t]/P_{i}(t)$.

(c) Les \'el\'ements de $B$ appartiennent \`a $\Br \,  \U$.

(d) Les morphismes $W_{i} \subset X_{m_{i}} \subset X$, o\`u la premi\`ere fl\`eche
est une immersion ouverte de $k_{i}$-vari\'et\'es et la seconde une immersion ferm\'ee
de $k$-vari\'et\'es, s'\'etendent 
en des morphismes $\W_{i} \subset \X_{{\bf m}_{i}} \subset \X$,
le premier \'etant une immersion ouverte de $\bf m_{i}$-sch\'emas, le second
une immersion ferm\'ee de $\calo_S$-sch\'emas.

(e) Le $k_{i}$-morphisme fini  \'etale $W'_{i} \to W_{i}$ s'\'etend en 
un morphisme fini \'etale surjectif  $\W'_{i} \to \W_{i} $  de ${\bf m}_{i}$-sch\'emas lisses,
 et les r\'esidus des
\'el\'ements de $B$ sont dans $H^1(\W'_{i} / \W_{i} , \Q/\Z) = H^1(K'_{i}/K_{i}, \Q/\Z)$.

(f) Pour tout $i$, toute place $w$ de $k_i$ non au-dessus d'une place de $S$
et totalement d\'ecompos\'ee dans $k'_i/k_i$, et tout $\sigma \in
\Gal(K'_i/k'_i K_i)$,
la fibre de $\W_{i}$ en $w$ poss\`ede
un $F(w)$-point dont le Frobenius (relativement au rev\^etement
$\W'_{i}/\W_{i}$) est $\sigma$. Ceci est possible via le lemme-cl\'e 
de \cite{ekedahl}. En particulier
pour toute place $v$ de $k$ non dans $S$ et tout $i$, la fibre de $\W_{i}$ au-dessus de tout
$F(v)$-point  de ${\bf m}_{i}$ poss\`ede un $F(v)$-point --
 mais il se peut que ni ${\bf m}_{i}$ ni  a fortiori $\W_{i}$   n'aient de $F(v)$-point.

(g) Tout \'el\'ement du groupe fini  $B'=B \cap  \Br X$ 
appartient \`a $\Br \X$.

\smallskip

On va maintenant ajouter \`a l'ensemble $B \subset \Br U$ de nouveaux
\'el\'ements. Appelons $\Lambda_1$ l'ensemble des \'el\'ements
de $\Br(k(t))$ non ramifi\'es en dehors de $\infty$ et des $m_i$, et
dont les r\'esidus en chaque $m_i$ sont dans $H^1(\Gal(k'_i/k_i),\Q/\Z)$.
D'apr\`es la suite exacte de Faddeev, le groupe $\Lambda_1/\Br k$
est fini.
Le groupe engendr\'e par les \'el\'ements $\cores_{k_i/k}(t-e_i,\chi_i)$, o\`u $e_i$ est vu dans 
$k_i$ et $\chi_i \in \Hom(\Gal(k'_i/k_i),\Q/\Z)$, est un sous-groupe fini
$\Lambda \subset \Lambda_1$ qui se surjecte sur $\Lambda_1/\Br k$.
Posons $C=B \cup \Lambda$ et $C'=C \cap \Br X$.
Quitte \`a agrandir encore $S$, on peut supposer $C' \subset \Br \X$.

\begin{lem}\label{nouvelletechnique}
Avec les hypoth\`eses et notations ci-dessus,
il existe un point ad\'elique
$$(P_v) \in  [X(k_{v_{0}})
\times \prod_{v \in S \setminus \{v_{0}  \} }   W_{v} \times \prod_{v \notin S} \U(O_{v})]^{C'}  \subset E$$
tel que de plus le point $P_{v_0}$ v\'erifie
\begin{equation} \label{nulalpha}
\alpha(P_{v_0})=0
\end{equation}
pour tout $\alpha \in B$ et
\begin{equation} \label{nulbeta}
\beta(f(P_{v_0}))=0
\end{equation}
pour $\beta \in \Lambda$.
\end{lem}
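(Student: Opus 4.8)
Le plan est de partir d'un point de $E$, non vide par hypoth\`ese, et de ne le modifier qu'aux places $v_{0}$ et $v\notin S$, l'hypoth\`ese (iv) garantissant qu'on ne quitte pas $E$. Je fixe donc $(Q_{v})\in E$, je pose $P_{v}=Q_{v}$ pour $v\in S\setminus\{v_{0}\}$, je prends pour $P_{v}$ un point quelconque de $\U(O_{v})$ (non vide d'apr\`es (a)) lorsque $v\notin S$, et je choisis la composante $P_{v_{0}}$ comme suit.

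Pour $P_{v_{0}}$, j'utilise la section $s_{v_{0}}$ de $f$ au-dessus de $U_{0}\times_{k}k^{h}_{v_{0}}$. Je choisis $t_{0}\in(U_{0}\cap V)(k^{h}_{v_{0}})$ tel que, pour chaque $i$, l'\'el\'ement $t_{0}-e_{i}$ soit une puissance $N$-i\`eme dans tout compl\'et\'e $(k_{i})_{w}$ en une place $w\mid v_{0}$ : l'ensemble de tels $t_{0}$ est un ouvert non vide de $k_{v_{0}}$, puisque les puissances $N$-i\`emes forment localement un sous-groupe ouvert d'indice fini et que $k^{h}_{v_{0}}$ est dense dans $k_{v_{0}}$; la condition $t_{0}\in V$ rend de plus $G_{t_{0}}$ isotrope comme en (iii). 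Je pose $P_{v_{0}}=s_{v_{0}}(t_{0})\in U(k_{v_{0}})$. Les deux conditions en $v_{0}$ sont alors satisfaites par construction : comme on a impos\'e $\alpha(s_{v_{0}}(t))=0$ pour tout $\alpha\in B$ (c'est exactement ce qu'assurent les lemmes \ref{florence} et \ref{surjhenselkt}), la sp\'ecialisation en $t_{0}$ donne $\alpha(P_{v_{0}})=0$; et pour $\beta=\cores_{k_{i}/k}(t-e_{i},\chi_{i})\in\Lambda$, chaque symbole local $(t_{0}-e_{i},\chi_{i})_{w}$ est nul car $t_{0}-e_{i}$ est une puissance $N$-i\`eme, d'o\`u $\beta(f(P_{v_{0}}))=\beta(t_{0})=0$.

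Il reste \`a voir que $(P_{v})$ appartient encore \`a $E$. C'est ici qu'intervient de mani\`ere d\'ecisive l'hypoth\`ese (iv) : pour tout $\xi\in\Br X$, l'application ${\rm ev}_{\xi}$ \'etant constante sur $X(k_{v_{0}})$, on a $\inv_{v_{0}}\xi(P_{v_{0}})=\inv_{v_{0}}\xi(Q_{v_{0}})$, de sorte que le terme en $v_{0}$ de $\sum_{v}\inv_{v}\xi$ ne change pas. Quitte \`a agrandir $S$ pour qu'un syst\`eme de repr\'esentants du groupe fini $\Br X/\Br k$ soit contenu dans $\Br\X$, toute classe de $\Br X$ s'\'evalue trivialement sur $\X(O_{v})\supseteq\U(O_{v})$ pour $v\notin S$, et le changement des composantes hors de $S$ ne modifie pas non plus la somme. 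On obtient ainsi $\sum_{v}\inv_{v}\xi(P_{v})=\sum_{v}\inv_{v}\xi(Q_{v})=0$ pour tout $\xi\in\Br X$ : le point $(P_{v})$ est orthogonal \`a $\Br X$, donc appartient \`a $E$, a fortiori \`a l'ensemble orthogonal au sous-groupe $C'\subset\Br X$, et satisfait les conditions en $v_{0}$ d\'ej\`a \'etablies.

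Le principal obstacle est la pr\'esence de la place exceptionnelle $v_{0}$, impos\'ee d'avance : contrairement \`a la m\'ethode des fibrations usuelle, on ne peut pas disposer librement de sa composante ad\'elique. C'est la conjonction de (iv) --- qui fige les invariants en $v_{0}$ des classes de $\Br X$ ---, de la section $s_{v_{0}}$ sur $k^{h}_{v_{0}}$ --- qui annule $B$ en $v_{0}$ --- et du choix local de $t_{0}$ en puissances $N$-i\`emes --- qui annule $\Lambda$ en $v_{0}$ --- qui permet de concilier les contraintes locales en $v_{0}$ avec l'orthogonalit\'e globale \`a $\Br X$, suivant la variante de la m\'ethode des fibrations annonc\'ee dans l'introduction.
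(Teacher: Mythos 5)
Your proof is correct for the lemma as stated and follows essentially the same strategy as the paper's: take $P_{v_0}=s_{v_0}(t_0)$ with each $t_0-e_i$ a local $N$-th power above $v_0$ (which kills $B$ and $\Lambda$ at $v_0$), then use hypothesis (iv) to see that replacing the $v_0$-component, and moving the components outside $S$ inside $\U(O_v)$, does not change any Brauer--Manin sum. Two remarks. First, your justification that the set of admissible $t_0$ is nonempty is incomplete: openness of the subgroup of $N$-th powers gives that this set is \emph{open}, not that it is nonempty (a finite intersection of translates of cosets could a priori be empty); one must exhibit a point, which the paper does by taking $\lambda_{v_0}=1/p^{Nm}$ with $m$ large, so that $\lambda_{v_0}-e_i=(1-e_ip^{Nm})/p^{Nm}$ is visibly an $N$-th power in each completion of $k_i$ above $v_0$ (resp.\ $\lambda_{v_0}$ large and positive when $v_0$ is real). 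Second, the paper's proof delivers slightly more than the bare statement: using the density of $U(k_v)$ in $X(k_v)$ and the local constancy of the evaluation maps, it places the components at $v\in S\setminus\{v_0\}$ in $U(k_v)\cap W_v$. This extra property is used immediately afterwards, when the formal lemma is applied to the group $C\subset \Br U$, whose elements need not extend to $X$, so that $\alpha(P_v)$ for $\alpha\in C$ only makes sense if $P_v\in U(k_v)$. Keeping $P_v=Q_v\in W_v$ as you do satisfies the lemma as written, but would not suffice for that later step; it is not a gap in the proof of the statement itself, but it is the reason the paper's proof takes the slightly longer route.
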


Noter que $\beta(f(P_{v_0}))=\beta(P_{v_0})$ 
puisque les \'el\'ements de $\Lambda$ sont dans $\Br(k(t))$.

\begin{proof} On commence d'abord
par choisir $\lambda_{v_0} \in U(k_{v_0})$ tel
que $\beta(\lambda_{v_0})=0$ pour tout $\beta \in \Lambda$. Pour $v_0$
complexe, n'importe quel $\lambda_{v_0}$ convient; pour $v_0$
r\'eel, il suffit de prendre $\lambda_{v_0}$ suffisamment grand; pour
$v_0$ fini, on cherche $\lambda_{v_0}$
dans $U(k_{v_0}) \subset k_{v_0}$ tel que 
$\lambda_{v_0}-e_i$ soit une puissance $N$-i\`eme dans $k_{w_0}$ pour tout $i$
et toute place $w_{0}$ de $k_{i}$ au-dessus de la place $v_{0}$,
o\`u $N$ est le ppcm des ordres des caract\`eres $\chi_i \in 
\Hom(\Gal(k'_i/k_i),\Q/\Z)$. Pour cela on prend $\lambda_{v_0}=1/p^{Nm}$ 
 o\`u $p$ est le nombre premier que divise 
$v_0$.   
On prend $m$ sufffisamment grand pour que
 pour tout $i$ et   tout compl\'et\'e de $k_{i}$
en une place $w_{0}$ au-dessus de la place $v_{0}$, $1-e_{i}p^{Nm}$
soit une puissance $N$-i\`eme dans $k_{i,w_{0}}$.

\smallskip

Posons alors $P_{v_0}=s_{v_0}(\lambda_{v_0})$.
Par construction on a alors $\alpha(P_{v_0})=0$ pour tout $\alpha \in B$
et $\beta(f(P_{v_0})=\beta(\lambda_{v_0})=0$ pour tout $\beta \in \Lambda$.

\smallskip

Par ailleurs on 
dispose d'un point ad\'elique $(Q_v)$ dans $E$, donc a
fortiori l'ensemble
$$E':= [X(k_{v_{0}})
\times \prod_{v \in S \setminus \{v_{0}  \} }   W_{v} \times \prod_{v \notin S} \X(O_{v})]^{C'}$$
est non vide.
Comme tous les \'el\'ements de $C$ s'annulent en $P_{v_{0}}$,
tout \'el\'ement du groupe $C'$ s'annule aussi en $P_{v_{0}}$.
Mais alors, comme tout \'el\'ement de $\Br X$ est (par l'hypoth\`ese iv))
constant quand \'evalu\'e sur $X(k_{v_{0}})$,
ceci implique que tout \'el\'ement de $C'$ s'annule sur $X(k_{v_{0}})$.
Pour $v\notin S$, on  a $\U(O_{v}) \neq \emptyset$ et tout
\'el\'ement de $C'$ s'annule 
   sur $\X(O_{v})$.
Pour toute place $v$, l'ensemble $U(k_{v} )$ est dense  dans $X(k_{v})$ pour la topologie $v$-adique.
  Il existe donc une famille $(P_{v}) \in U(\A_{k})^{C'}$
avec  $P_{v}
 \in U(k_{v}) \cap W_{v}$ pour toute place $v \in S \setminus \{v_{0}\}$, et $P_{v} \in \U(O_{v})$ pour $v\notin S$,
 donc $(P_{v})  \in U(\A_{k})$. Comme $C'$ s'annule en tout point de  $X(k_{v_{0}})$, on peut prendre pour $P_{v_{0}}$
 le point fix\'e plus haut.

\end{proof}

On fixe d\'esormais un point ad\'elique $(P_v)$ comme dans le lemme 
ci-dessus.
Le {\it lemme formel}  \cite[Lemme 2.6.1]{Hara} sous la forme donn\'ee dans \cite[d\'emonstration du Thm. 1.4]{CT},
 appliqu\'e \`a $U  \subset X$,
 assure alors l'existence d'un ensemble fini $T$ de places, $T \cap S=\emptyset$
 et  pour $v \in T$ de points $M_{v }\in U(k_{v}) \cap \X(O_{v})$  tels que
\begin{equation} \label{1star}
 \sum_{v\in S}\alpha(P_{v}) + \sum_{v \in T} \alpha(M_{v})= 0 \in \Q/\Z
\end{equation}
 pour tout $\alpha  \in C=B \cup \Lambda $.

Pour chaque  $i=1,\dots,l$, on choisit (ce qui est possible par le th\'eor\`eme
de Tchebotarev)
des places $w_i$ de $k_i$ de degr\'e absolu $1$,
ne divisant pas de places de $S \cup T$, totalement d\'ecompos\'ees
dans l'extension $k'_i/k_i$, et induisant des places $v_i$ de $k$
deux \`a deux distinctes.
On fixe  $\theta_i \in O_{v_i}$ tel que $v_i(P_i(\theta_i))=1$,
ce qui est possible car $w_i$ est de degr\'e $1$ sur $v_i$.

\medskip

On dispose donc des \'el\'ements $\lambda_{v}=f(P_{v}) \in k_{v}$ pour $v \in S \setminus{v_{0}}$,
$\lambda_{v}=f(M_{v}) \in O_{v}$ pour $v \in T$ et $\theta_{i}  \in O_{v_i}$ pour $i=1,\dots, l$.

Le th\'eor\`eme \ref{hilbertirr} combin\'e avec 
 la proposition \ref{appforteHilbertraf} appliqu\'ee \`a la r\'eunion
 de $S \setminus{\{v_{0}\}}$, $T$ et la r\'eunion des $\{v_{i}\}$,
fournissent alors  $\theta \in  \calo_S$, 
tr\`es proche de chaque $f(P_{v}) $ pour $v\in S\setminus \{v_{0}\}$,
de $f(M_{v}) \in O_{v}$ pour $v\in T$
et de $\theta_i \in O_{v_i}$ pour $i=1,\dots, l$ (on a donc
$v_i(P_i(\theta))=1$ pour tout $i$), avec de plus, 
en notant $\theta_{v_0}$ l'image de $\theta$ dans $k_{v_0}$~:

-- pour $v_0$ r\'eelle, $\theta_{v_0}$ suffisamment grand pour avoir
$\beta(\theta_{v_0})=0$ pour tout $\beta \in \Lambda$.

-- Pour $v_0$ finie, la propri\'et\'e
$\theta_{v_0}-e_i \in k_{i,w_0}^{*^N}$ pour toute place $w_{0}$ de $k_{i}$
au-dessus de $v_{0}$
pour $i=1,\dots,l$, o\`u $N$ est d\'efini comme le ppcm des ordres des $\chi_i
\in \Hom(\Gal(k'_i/k_i),\Q/\Z)$. En particulier on aura encore
\begin{equation} \label{betav0}
\beta(\theta_{v_0})=0 \in \Br k_{v_0}
\end{equation}
pour tout $\beta \in \Lambda$.

-- $\theta \in U_{0}(k)$ tel que {\it par sp\'ecialisation} en $\theta$
 l'image du groupe $B$ soit
 $\Br X_{\theta}/\Br k$.

\smallskip

On dispose maintenant du $\calo_S$-sch\'ema $\X_{\theta}$. Soit 
$X_{\theta}$ sa fibre g\'en\'erique. 

\begin{lem}\label{lemmeXtheta}
L'$\calo_S$-sch\'ema $\X_{\theta}$ poss\`ede~: 

\begin{itemize}

\item 
pour $v \in S$, un $k_v$-point $P'_v$, qui est de plus dans $W_v$ 
si $v \neq v_0$, 

\item 
pour $v \in T$, un $\calo_v$-point $M'_v$, 

\end{itemize}

\noindent
tels que pour tout  $\alpha \in C $ on ait
\begin{equation} \label{star3}
\sum_{v\in S} \alpha(P'_{v}) + \sum_{v \in T}\alpha(M'_{v})= 0.
\end{equation}

De plus $\X_{\theta}$ poss\`ede des points entiers 
$N_v \in {\mathcal X}(\calo_v)$ pour toute place $v \not \in 
S \cup T$.

\end{lem}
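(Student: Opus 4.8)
The plan is to transport, place by place, the local points produced in Lemma~\ref{nouvelletechnique} onto the fibre $\X_\theta$, exploiting that $\theta$ was chosen $v$-adically close to $f(P_v)$ for $v\in S\setminus\{v_0\}$, to $f(M_v)$ for $v\in T$, and to the auxiliary values $\theta_i$, and checking at each step that the Brauer values of the elements of $C=B\cup\Lambda$ are preserved. First I would treat the places $v\in S\setminus\{v_0\}$: since $P_v\in U(k_v)$ and $f$ is smooth on $U$, the map $f\colon U(k_v)\to \A^1(k_v)$ is open, so for $\theta$ close enough to $f(P_v)$ there is a point $P'_v\in X_\theta(k_v)$ arbitrarily close to $P_v$; taking it close enough keeps $P'_v\in W_v$, and, as every $\alpha\in C\subset\Br U$ has locally constant evaluation $U(k_v)\to\Br k_v$, we obtain $\alpha(P'_v)=\alpha(P_v)$. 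For $v\in T$ the smoothness of $\X\to\A^1_{\calo_S}$, applied now over the henselian ring $\calo_v$, lifts $M_v\in \U(\calo_v)$ to an integral point $M'_v\in\X_\theta(\calo_v)$ with the same reduction, whence again $\alpha(M'_v)=\alpha(M_v)$ for all $\alpha\in C$.

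At the distinguished place $v_0$ I would use not approximation but the section itself: set $P'_{v_0}:=s_{v_0}(\theta)$, the specialization at $\theta\in U_0(k)$ of the $k^h_{v_0}(t)$-section $s_{v_0}$, which is a $k_{v_0}$-point of $X_\theta$. By the normalization of $B$ one has $\alpha(s_{v_0}(t))=0$ in $\Br k_{v_0}(t)$, hence $\alpha(P'_{v_0})=0$ for every $\alpha\in B$; and for $\beta\in\Lambda\subset\Br(k(t))$ one has $\beta(P'_{v_0})=\beta(\theta_{v_0})$, which vanishes by the choice of $\theta$ (equation~\eqref{betav0} when $v_0$ is finite, the largeness of $\theta_{v_0}$ when $v_0$ is real, trivially when $v_0$ is complex). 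Thus $\alpha(P'_{v_0})=0=\alpha(P_{v_0})$ for all $\alpha\in C$. Combining the three cases with the basic relation \eqref{1star} yields termwise $\sum_{v\in S}\alpha(P'_v)+\sum_{v\in T}\alpha(M'_v)=\sum_{v\in S}\alpha(P_v)+\sum_{v\in T}\alpha(M_v)=0$, which is exactly \eqref{star3}.

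It remains to produce integral points $N_v\in\X(\calo_v)$ on $\X_\theta$ for $v\notin S\cup T$. Here $\theta\in\calo_v$, and I would split according to the reduction $\bar\theta\in\A^1(F(v))$. If $\bar\theta\in\U_0(F(v))$, property (a) furnishes an $F(v)$-point of the smooth fibre, which lifts by Hensel to $N_v$. If instead $v\mid P_i(\theta)$ for some $i$, then $\bar\theta$ is an $F(v)$-point of ${\bf m}_i$, and property (f), via the key lemma of \cite{ekedahl}, provides an $F(v)$-point $\bar N$ of the fibre of $\W_i$; since $\W_i$ is an open of a multiplicity-one geometrically integral component of the fibre over $m_i$ on which $X$ is smooth, the morphism $\X\to\A^1_{\calo_S}$ is smooth along $\W_i$, so $\X_\theta\to\spec\calo_v$ is smooth at $\bar N$ after base change along $\theta$, and $\bar N$ lifts to $N_v$.

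The main obstacle is precisely this last step. One must guarantee that the reductions falling on the degenerate fibres still carry \emph{smooth} local points that lift integrally over $\calo_v$; this is where hypothesis (ii) (the multiplicity-one splitting, ensuring $f$ is smooth along $\W_i$) combines with the Lang--Weil/Ekedahl existence statement (f) to do the decisive work. The auxiliary places $v_i$, where $v_i(P_i(\theta))=1$, are the controlled instances of this mechanism, while at all remaining $v\notin S\cup T$ the same argument applies, either through (a) when $\bar\theta$ is good or through (f) when $\bar\theta$ meets an ${\bf m}_i$; note that for the present lemma only the existence of the $N_v$ is required, the refined Frobenius information in (f) being reserved for the orthogonality computations carried out afterwards.
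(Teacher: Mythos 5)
Your proposal is correct and follows essentially the same route as the paper: implicit function theorem (local constancy of Brauer evaluation) at the places of $S\setminus\{v_0\}$ and $T$, the section $s_{v_0}$ specialized at $\theta$ together with the normalizations $\alpha(s_{v_0}(t))=0$ and $\beta(\theta_{v_0})=0$ at $v_0$, and, away from $S\cup T$, the dichotomy between good reduction (property (a) plus Hensel) and reduction onto some ${\bf m}_i$ (property (f) plus smoothness of $\W_i$ and Hensel). The only cosmetic difference is that the paper phrases the second case via the place $w$ of $k_i$ of residue degree $1$ above $v$ and the open immersion $\W_{i,x}\subset\X_{\theta,x}$, which is exactly the mechanism you describe.
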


\begin{proof} Pour $v \in T \cup S \setminus \{v_{0} \}$, 
on utilise le th\'eor\`eme des
fonctions implicites. On obtient des points $P'_v$ sur $X_{\theta}$
arbitrairement proche des $P_v$ pour $v \in S \setminus \{v_0 \}$
(donc dans $W_v$) et des points $M'_v$ arbitrairement proche des
$M_v$ (donc entiers) pour $v \in T$. De  (\ref{1star}) on d\'eduit,
pour tout $\alpha \in C$,
\begin{equation} \label{star2}
\sum_{v\in S \setminus \{v_{0}\}} \alpha(P'_{v}) + \sum_{v \in T}\alpha(M'_{v})= 0,
\end{equation}
car $\alpha(P_{v_0})=0$ (formule~(\ref{nulalpha}))
 pour tout $\alpha \in B $ et $\beta(P_{v_0})=0$ (formule~(\ref{nulbeta}))
pour tout $\beta \in \Lambda$.
 Que l'on ait $X_{\theta}(k_{v_{0}})\neq \emptyset$ r\'esulte de l'existence d'une section $s_{v_{0}}$ de $X \to \A^1_{k}$
au-dessus de $U_{   0, k_{v_0}}  $.
 On note $P'_{v_{0}}=s_{v_{0}}(\theta) \in X_{\theta}(k_{v_{0}})$
le point donn\'e par la section.
Pour $\alpha \in B$, on a $$\alpha(P'_{v_{0}})=0$$
puisque $\alpha(s_{v_{0}}(t))=0$. Pour $\beta \in \Lambda$, on a
$$\beta(P'_{v_{0}})=\beta(\theta_{v_0})=0$$
par la formule~\ref{betav0}.

\smallskip 

La $k$-vari\'et\'e $X_{\theta}$ contient donc les points $P'_{v}, v \in S,$ et les points $M'_{v}, v \in T$,
et pour tout $\alpha \in B$ on a bien la formule (\ref{star3})~:
$$ \sum_{v\in S} \alpha(P'_{v}) + \sum_{v \in T}\alpha(M'_{v})= 0.$$

\smallskip

Soit maintenant $v$ une place non dans $S \cup T$.
 
 Si l'on a $v(P_{i}(\theta))=0$ pour tout $i$, alors en $v$,  $\theta$ se r\'eduit en un point de $\U_{0}(F(v))$,
qui est image d'un point de  $\X_{\theta}(F(v)) \subset \U(F(v))$. Par lissit\'e et Hensel, ce point se rel\`eve en un point  
$N_{v} \in \X_{\theta}(O_{v}) \subset \U(O_{v})$.
De plus tout \'el\'ement de $B$ s'annule 
sur un tel point,
 car 
 on a
  $B \subset \Br(\U)$.

Supposons $v(P_{i}(\theta))>0$ pour un $i$ (unique car les $P_{i}$ sont premiers entre eux deux \`a deux
en dehors de $S$). Alors $P_{i}$ admet un z\'ero sur $F(v)$, il existe une place $w$ de $k_{i}$
au-dessus de $v$ avec $F(w)=F(v)$. Soit $x \in \A^1_{\calo_S}$ le point ferm\'e de corps r\'esiduel $F(v)$
associ\'e. On a alors une inclusion ouverte $\W_{i,x} \subset \X_{x} = \X_{\theta,x}$.
 La $F(v)$-vari\'et\'e lisse $\W_{i,x}$ poss\`ede un $F(v)$-point $n_{v}$. Ainsi la $F(v)$-vari\'et\'e
$  \X_{\theta,x}$ poss\`ede le $F(v)$-point 
lisse
 $n_{v}$, qui se rel\`eve en un $O_{v}$-point $N_{v}$ du $\calo_S$-sch\'ema  
lisse
$\X_{\theta}$.
\end{proof} 

On appelle $\Omega_{0}$ (resp. $\Omega_i$)
l'ensemble des places $v \not \in S \cup T$
telles que $v(P_{i}(\theta))=0$ pour tout $i$ (resp. telles que
$v(P_{i}(\theta))>0$).
Pour $i$ fix\'e, l'ensemble $\Omega_i$ contient la place $v_i$.
L'ensemble $S \cup T$ (qui contient $v_{0} $) et  les ensembles $\Omega_{i}, i=0, 1,\dots, l$ r\'ealisent une partition de l'ensemble $\Omega$ des places
de $k$.

Pour $\alpha \in B $,  $v \notin S \cup T$ et $N_{v} \in \X_{\theta}(O_{v}) \subset X(k_{v})$ comme 
dans la d\'emonstration du lemme \ref{lemmeXtheta},  c'est-\`a-dire  relev\'e d'un point 
$n_{v} \in \W_{i,x}(F(v))$,
on a la formule \cite[Cor. 2.4.3 et pp.~244--245]{Hara}
\begin{equation} \label{star5}
\alpha(N_{v})= d_{i}(v) \partial_{\alpha,i}(F_{i,n_{v}}) \in \Q/\Z,
\end{equation}
o\`u $\partial_{\alpha,i} \in H^1(\W'_{i}/\W_{i},\Q/\Z) = \Hom(\Gal(K'_{i}/K_{i}),\Q/\Z)$ est le r\'esidu de $\alpha$ au point g\'en\'erique de $W_{i}$,
o\`u $d_{i}(v)=v(P_{i}(\theta))$ et $F_{i,n_{v}} \in \Gal(K'_{i}/K_{i})$ est le Frobenius en $n_{v}$
pour le  rev\^etement $\W'_{i}/\W_{i}$.

 \medskip

Fixons $i \in \{1,...,l\}$. On a alors le lemme suivant, analogue
du lemme 4 de \cite{Hara3}~:

\begin{lem}
L'\'el\'ement $\sum_{v \in \Omega_{i}} d_{i}(v) F_{i,n_{v}}$ de
$\Gal(K'_i/K_i)$ est dans le sous-groupe $\Gal(K'_i/k'_iK_i)$.
\end{lem}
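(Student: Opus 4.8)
The plan is to reduce the statement to a character computation and then read it off from global reciprocity applied to a well-chosen constant Brauer class. First I would recall that the natural surjection $\Gal(K'_i/K_i) \to \Gal(k'_i/k_i)$, obtained by restricting the covering $\W'_i/\W_i$ to the constant-field direction, carries the Frobenius $F_{i,n_v}$ to the Frobenius $\mathrm{Frob}_{w(v)} \in \Gal(k'_i/k_i)$ of the place $w(v)$ of $k_i$ below $n_v$ (recall $F(w(v))=F(v)$), by functoriality of the Frobenius substitution. Since $\Gal(K'_i/k'_iK_i)$ is precisely the kernel of this surjection, the assertion is equivalent to
$$\sum_{v \in \Omega_i} d_i(v)\, \mathrm{Frob}_{w(v)} = 0 \quad \text{in } \Gal(k'_i/k_i).$$
As $\Gal(k'_i/k_i)$ is finite abelian, it suffices to show that $\chi$ annihilates this sum for every $\chi \in \Hom(\Gal(k'_i/k_i),\Q/\Z)$.

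Fix such a $\chi$ and put $\beta = \cores_{k_i/k}(t-e_i,\chi) \in \Lambda$ (here $e_i \in k_i$ is the image of $t$, so that $t-e_i$ cuts out $m_i$ and $N_{k_i/k}(t-e_i)=P_i(t)$). Since $\theta \in U_0(k)$ avoids $m_i$, the class $\beta$ is unramified at $t=\theta$ and specialises to $\beta(\theta) \in \Br k$, for which reciprocity gives $\sum_v \inv_v \beta(\theta) = 0$. On the other hand $\beta$ pulls back on the fibre $X_\theta$ to the constant class $\beta(\theta)$, and $f(P'_v)=f(M'_v)=\theta$, so the evaluations in (\ref{star3}) are $\beta(P'_v)=\inv_v\beta(\theta)$ and $\beta(M'_v)=\inv_v\beta(\theta)$; thus (\ref{star3}) reads $\sum_{v \in S \cup T}\inv_v\beta(\theta)=0$. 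Subtracting the two identities yields
$$\sum_{v \notin S \cup T}\inv_v\beta(\theta)=0.$$

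It remains to identify the surviving local terms. For $v \notin S \cup T$ we have $\inv_v\beta(\theta)=\sum_{w \mid v}\inv_w(\theta-e_i,\chi)$, and once $S$ is enlarged so that $k'_i/k_i$ is unramified off $S$, local class field theory gives $\inv_w(\theta-e_i,\chi)=w(\theta-e_i)\,\chi(\mathrm{Frob}_w)$ for each such $w$. Now $\theta-e_i$ is a $w$-unit unless $v \in \Omega_i$: indeed $v(P_i(\theta))=v\big(N_{k_i/k}(\theta-e_i)\big)$ vanishes for $v \in \Omega_0$, and pairwise coprimality of the $P_j$ off $S$ forces it to vanish for $v \in \Omega_j$ with $j \neq i$ as well. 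Hence only $v \in \Omega_i$ contribute, and there the single place $w(v)$ with $f(w(v)/v)=1$ carries $w(v)(\theta-e_i)=d_i(v)$. The displayed identity therefore becomes $\sum_{v \in \Omega_i} d_i(v)\,\chi(\mathrm{Frob}_{w(v)})=0$ for every $\chi$, which gives the claim.

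The main obstacle I anticipate is entirely in the bookkeeping of these local contributions: one must pin down the compatibility of $F_{i,n_v}$ with $\mathrm{Frob}_{w(v)}$ under restriction to the constant field, and must verify that the corestriction formula $\inv_v \circ \cores_{k_i/k} = \sum_{w \mid v}\inv_w$, combined with the unramifiedness of $k'_i/k_i$ off $S$, leaves no stray terms at the bad places — all of the contributions at $S$ and $T$ having been absorbed in advance into (\ref{star3}) and the reciprocity law. Everything else is the formal mechanism already used in \cite{Hara3}.
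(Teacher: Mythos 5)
Your argument is correct and is essentially the paper's own proof: both reduce the claim to showing that $\chi\bigl(\sum_{v \in \Omega_{i}} d_{i}(v) F_{i,n_{v}}\bigr)=0$ for every character $\chi$ of $\Gal(k'_i/k_i)=\Gal(k'_iK_i/K_i)$, and both obtain this by applying the relation (\ref{star3}) together with global reciprocity to the auxiliary classes $\cores_{k_i/k}(t-e_i,\chi)\in\Lambda$, the surviving contributions outside $S\cup T$ being concentrated on $\Omega_i$. The only cosmetic difference is that you compute the local terms at $v\in\Omega_i$ directly by local class field theory and then transport $F_{i,n_v}$ to $\mathrm{Frob}_{w(v)}$, where the paper simply invokes its specialization formula (\ref{star5}).
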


\begin{proof}
On va utiliser l'\'egalit\'e (\ref{1star}) 
pour les \'el\'ements $\alpha \in \Lambda$
(c'est \`a cet effet qu'on a rajout\'e l'ensemble $\Lambda$ \`a 
$B$ pour obtenir l'ensemble $C$ auquel on a appliqu\'e le lemme 
formel). Soit 
$$\chi_i \in \Hom(\Gal(k'_i/k_i),\Q/\Z)=\Hom(\Gal(k'_iK_i/K_i),\Q/\Z),$$ 
appliquons 
(\ref{1star}) \`a l'\'el\'ement $\alpha_i=\cores_{k_i/k}(t-e_i,\chi_i)$ de 
$\Lambda$. D'apr\`es la loi de r\'eciprocit\'e du corps de classes 
global, on a, 
en notant $\theta_v$ l'image de $\theta$ dans $k_v$~:
$$\sum_{v \in \Omega_k} \alpha_i(\theta_v)=0$$
et donc d'apr\`es (\ref{star3}), comme les points $P'_v$ sont sur 
la fibre $X_{\theta}$~:
$$\sum_{v \not \in (S \cup T)} \alpha_i(\theta_v)=0.$$
On observe que si une place $v \not \in (S \cup T)$ n'est pas dans 
$\Omega_i$, alors $\alpha_i(\theta_v)=0$ par d\'efinition de $\Omega_i$. 
Ainsi 
$$\sum_{v \in \Omega_i} \alpha_i(\theta_v)=0$$
ce qui donne, par exemple en utilisant la formule (\ref{star5}),
$$\chi_i(\sum_{v \in \Omega_{i}} d_{i}(v) F_{i,n_{v}})=0$$
puisque $\alpha_i(\theta_v)=\alpha_i(N_v)$, l'\'el\'ement $\alpha_i$ 
\'etant dans $\Br(k(t))$. Finalement on a montr\'e que 
$\sum_{v \in \Omega_{i}} d_{i}(v) F_{i,n_{v}}$ \'etait annul\'e par 
tous les caract\`eres de $\Gal(k'_iK_i/K_i)$, d'o\`u le lemme.

\end{proof}

Par d\'efinition des places $w_i$ de $k_i$ et de la condition f) d\'efinie
avant le lemme~\ref{nouvelletechnique},
on peut alors gr\^ace au lemme pr\'ec\'edent
 trouver un point $r_i \in \W_{i}(F(v_i ))= \W_{i}(F(w_i))$
dont le Frobenius associ\'e  est
 $$F_{i,r_i }=F_{i,n_{v_i }} - \sum_{v \in \Omega_i}
d_{i}(v) F_{i,n_{v}} \in \Gal(K'_{i}/k'_iK_{i}) \subset \Gal(K'_i/K_i).$$
(Noter que comme $w_i$ est totalement d\'ecompos\'ee dans
l'extension $k'_i/k_i$, le Frobenius $F_{i,n_{v_i }}$ est aussi dans
$\Gal(K'_i/k'_i K_i)$).

\smallskip

On rel\`eve $r_i  \in \W_{i}(F(v _i  )) \subset \X_{\theta}(F(v _i  ))$
  en un point $Q_i \in \X_{\theta}(O_{v_i })$.
On a $$d_{i}(v_i )=v_i (P_i(\theta))=v_i(P(\theta_i ))=1.$$

On a alors pour tout $\alpha \in B$ :
\begin{equation} \label{star4}
\alpha(Q_i)  + \sum_{v \in \Omega_i \setminus \{v_i \}} \alpha(N_{v})
= \partial_{\alpha,i}(F_{i,r_i}-F_{i,n_{v_{i} }} + \sum_{v \in \Omega_{i}} d_{i}(v) F_{i,n_{v}})=
\partial_{\alpha,i}(0)=0.
\end{equation}
Tout ad\`ele de $X_{\theta}$ de composantes $P'_{v_{0}}$ en $v_{0}$,  $P'_{v}, v \in S \setminus v_{0}$, $M'_{v} , v \in T$,
 $Q_{i}$ en $v_{i}$ et $N_{v} $ pour $v \in \Omega_{i} \setminus \{v_{i}\}$ ($i=1,\dots, l$), et enfin  $N_{v}$ pour les autres places $v$
est alors dans
$$  \prod_{v \in S} X_{\theta}(k_{v}) \times \prod_{v \notin S} \X_{\theta}(O_{v})$$
et d'apr\`es (\ref{star3}) et (\ref{star4}) est orthogonal
\`a tout $\alpha \in B$, donc \`a $\Br X_{\theta}$ -- puisque $B$ se surjecte sur  $\Br X_{\theta}/\Br k$ . En outre il appartient
\`a
$$X(k_{v_{0}}) \times \prod_{v \in S \setminus \{v_{0}  \} }   W_{v} \times \prod_{v \notin S} \X(O_{v}).$$
 
L'un des principaux r\'esultats de  \cite{CTX}, le
 th\'eor\`eme 3.7,  appliqu\'e \`a 
l'espace homog\`ene  $\X_{\theta}$,
  montre alors qu'il existe un point de $X_{\theta}(k)$
 proche de chacun des $P'_{v}, v \in S \setminus v_{0}$, donc dans $W_v$ 
pour $v \in S \setminus v_{0}$,
 et entier en dehors de $S$ sur $\X_{\theta}$, donc sur $\X$. Ceci 
ach\`eve la preuve du th\'eor\`eme.
\end{proof}

\begin{rem}\label{souventsatisfaite}
 {\rm

  L'hypoth\`ese (iv)  est satisfaite dans chacun des cas suivants :

\medskip

(a) 
 $\Br X/\Br k = 0$; dans ce cas la conclusion du th\'eor\`eme est que 
$X$ v\'erifie l'approximation forte en dehors de $v_0$. 
 
 (b)
$\Br X_{k_{v_{0}}} /\Br k_{v_{0}}=0$;

(c) La  place  $v_{0}$ est complexe; dans ce cas l'hypoth\`ese (iii) est toujours satisfaite, car $k_{v_{0}}(t)$
est alors un corps $C_{1}$ par le th\'eor\`eme de Tsen (\cite{gillesza}, 
Th. 6.2.8.). Sur un tel corps, c'est un th\'eor\`eme de Springer que tout
  espace homog\`ene
d'un groupe lin\'eaire connexe a un point rationnel \cite[Chap. III,  \S 2, Thm. 1' et Cor. 2]{CG}.
}

\end{rem}

\begin{rem}
{\rm On peut noter que la d\'emonstration ci-dessus est un raffinement 
de la d\'emonstration du
 th\'eor\`eme 2 de \cite{Hara3}.  
On a remplac\'e l'ensemble infini $\Sigma$ de places de $k$
 et la place appel\'ee $v_{\infty} $ dans  \cite{Hara3}  par
 l'unique place $v_{0}$ de l'\'enonc\'e ci-dessus.
L'hypoth\`ese d'existence d'une $k_{v_0}$-section rationnelle 
permet de contr\^oler ce qui se passe en la place $v_0$; ceci remplace le 
travail avec la place $v_{\infty}$ et l'ensemble infini 
$\Sigma$ de \cite{Hara3}. On a aussi eu besoin d'une version 
sophistiqu\'ee du th\'eor\`eme 
d'approximation forte (proposition~\ref{appforteHilbertraf})}
\end{rem}

\section{L'\'equation $\sum_{i=0}^2 a_{i}(t)x_{i}^2=p(t).$ }

Soit $k$ un corps de caract\'eristique z\'ero, $\k$ une cl\^oture alg\'ebrique.
Soient $a_{i}(t), i=0,1,2,3$ dans $k[t]$. 
{\it On suppose que les $a_{i}(t)$ sont premiers entre eux deux \`a deux.}\medskip

Soit $Y_{0} \subset \A^1_{k} \times \P^3_{k}$ la $k$-vari\'et\'e d\'efinie par
$$ \sum_{i=0}^3 a_{i}(t) x_{i}^2=0.$$

Les points
 singuliers de $Y_{0}$ sont exactement les points 
dont la coordonn\'ee $t$ est une racine multiple de l'un des
$a_{i}(t)$ et tels que pour cet $i$, $x_{i}=1$ et $x_{j}=0$ pour $j\neq i$.
 
Notons $Y \subset Y_{0}$ l'ouvert de lissit\'e, compl\'ement de ces points.

Soit $Z_{0} \subset Y_{0}$ le ferm\'e d\'efini par $x_{3}= 0$. C'est donc la $k$-vari\'et\'e
d\'efinie dans $\A^1_{k} \times \P^2_{k}$ par 
$$ \sum_{i=0}^2 a_{i}(t) x_{i}^2=0.$$

Les point singuliers de  $Z_{0}$ sont
les points 
dont la coordonn\'ee $t$ est une racine multiple de l'un des
$a_{i}(t)$ et tels que pour cet $i$, $x_{i}=1$ et $x_{j}=0$ pour $j\neq i$.
Soit 
$Z \subset Z_{0}$ le lieu de lissit\'e.
Soit $X \subset \A^1_{k} \times \P^3_{k}$ le lieu de lissit\'e de la vari\'et\'e d\'efinie par
$$ \sum_{i=0}^2 a_{i}(t) x_{i}^2 + a_{3}(t)=0.$$

On a donc $Z  \subset Y$, et $X$ est le compl\'ementaire de
$Z$ dans $Y$.

\medskip

L'\'enonc\'e suivant g\'en\'eralise  \cite[Prop. 5.2]{CTX2}.

\begin{prop}\label{brauernul} Soit $X$ comme ci-dessus.
Si  le produit des $a_{i}$ n'est pas un carr\'e dans $\k[t]$,
alors $\Br X/\Br k=0.$
\end{prop}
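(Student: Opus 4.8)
Le plan est d'analyser $\Br X$ via son plongement dans $\Br k(X)$. Je poserais $K=k(t)$ et $d=\prod_{i=0}^{3} a_{i}$. Comme $X$ est lisse sur $k$, le groupe $\Br X$ s'injecte dans $\Br k(X)$, son image \'etant form\'ee des classes non ramifi\'ees en tout point de codimension $1$ (injectivit\'e pour les sch\'emas r\'eguliers et puret\'e de Gabber). Il suffit donc de montrer que toute classe de $\Br k(X)$ non ramifi\'ee sur $X$ provient de $\Br k$. Je travaillerais sur la fibre g\'en\'erique $X_{\eta}/K$ de $f$, quadrique affine lisse compl\'ementaire, dans la $K$-quadrique projective lisse $Y_{\eta}:\sum_{i=0}^{3}a_{i}x_{i}^2=0$, de la conique $C:\sum_{i=0}^{2}a_{i}x_{i}^2=0$ situ\'ee \`a l'infini (la coprimalit\'e deux \`a deux des $a_{i}$ garantissant la lissit\'e au point g\'en\'erique).

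Je commencerais par calculer $\Br X_{\eta}/\Br K$. La surface quadrique $Y_{\eta}$ \'etant g\'eom\'etriquement rationnelle, on a $\Br \overline{Y_{\eta}}=0$, et $\Pic \overline{Y_{\eta}}\cong \Z^2$ est un module de permutation, les deux familles de g\'en\'eratrices \'etant \'echang\'ees par $\Gal(K(\sqrt d)/K)$ ; il en r\'esulte $H^1(K,\Pic \overline{Y_{\eta}})=0$ et, via la suite de Hochschild--Serre, $\Br Y_{\eta}=\Br K$. La suite des r\'esidus le long de $C$ fournit alors une injection $\Br X_{\eta}/\Br K \hookrightarrow H^1(K(C),\Q/\Z)$ d'image form\'ee de caract\`eres non ramifi\'es sur la conique lisse $C$.

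C'est ici qu'interviendrait l'hypoth\`ese. La seule classe susceptible d'\^etre non constante est gouvern\'ee par le discriminant $d$ : son r\'esidu le long de $C$ correspond \`a l'extension quadratique donn\'ee par $\sqrt d$, et la classe n'est d\'efinie g\'eom\'etriquement qu'au-dessus de $\overline{k}(t)(\sqrt d)$. Puisque $d$ n'est pas un carr\'e dans $\overline{k}[t]$, cette extension est non triviale et le rev\^etement discriminant est connexe, de sorte que l'involution d'\'echange interdit la descente : on aurait $\Br \overline{X_{\eta}}^{\Ga_K}=0$ et le caract\`ere de r\'esidu ne serait pas non ramifi\'e. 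J'en conclurais $\Br X_{\eta}/\Br K=0$.

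Il resterait \`a descendre \`a $X$. Une classe $\alpha$ non ramifi\'ee sur $X$ induit une classe non ramifi\'ee sur $X_{\eta}$, donc $\alpha \in \Br K=\Br k(t)$. Je contr\^olerais alors les r\'esidus de $\alpha$ aux points ferm\'es de $\A^1_{k}$ : aux racines des $a_{i}$, deux \`a deux distinctes par coprimalit\'e, et \`a l'infini, la non-ramification de $\alpha$ le long des diviseurs fibraux correspondants --- chacun poss\'edant une composante g\'eom\'etriquement int\`egre de multiplicit\'e $1$ --- forcerait l'annulation de ces r\'esidus, et la suite exacte de Faddeev placerait $\alpha$ dans l'image de $\Br k$. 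La principale difficult\'e sera l'\'etape pr\'ec\'edente : identifier exactement $\Br X_{\eta}/\Br K$ et montrer que $d\notin \overline{k}[t]^2$ l'annule, ainsi que la comptabilit\'e pr\'ecise des r\'esidus aux fibres d\'eg\'en\'er\'ees.
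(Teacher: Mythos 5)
Votre mise en place (suite de Gysin pour $C\subset Y_{\eta}$, module de permutation $\Pic Y_{\ov K}\cong\Z^{2}$) est correcte, mais votre r\'eduction \`a la fibre g\'en\'erique bute ensuite sur un point essentiel : l'\'enonc\'e interm\'ediaire $\Br X_{\eta}/\Br K=0$ (avec $K=k(t)$) est \emph{faux} sous les hypoth\`eses de la proposition. Posons $d=\prod_{i}a_{i}$ : l'hypoth\`ese dit que $d$ n'est pas un carr\'e dans $\k[t]$, donc a fortiori pas dans $K$. On a alors $\Pic X_{\ov K}\cong \Z$ muni de l'action de $\Gal(K(\sqrt d)/K)$ par $-1$ (quotient de $\Pic Y_{\ov K}=\Z^{2}$ par la classe de la conique \`a l'infini), d'o\`u $H^{1}(K,\Pic X_{\ov K})=\Z/2$; comme $\Br X_{\ov K}=0$ et $H^{3}(K,\G_{m})=0$ pour $K=k(t)$ avec $k$ corps de nombres, la suite de Hochschild--Serre donne $\Br X_{\eta}/\Br K\cong\Z/2$ (c'est le calcul de \cite[\S 5.8]{CTX} : la fibre g\'en\'erique est un espace homog\`ene \`a stabilisateur un tore de dimension $1$ d\'eploy\'e par $\sqrt d$). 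Votre argument de r\'esidu le long de $C$ ne d\'etecte pas cette classe : son r\'esidu est le caract\`ere \emph{constant} associ\'e \`a $\sqrt d$, tir\'e en arri\`ere de $H^{1}(K,\Q/\Z)$, donc parfaitement non ramifi\'e sur la conique lisse $C$; l'assertion selon laquelle ce caract\`ere ne serait pas non ramifi\'e est erron\'ee.

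Le vrai contenu de la proposition est que cette classe non triviale de $\Br X_{\eta}$ ne se prolonge pas en une classe non ramifi\'ee sur $X$ tout entier : elle ramifie le long de fibres d\'eg\'en\'er\'ees de $f$, et c'est pr\'ecis\'ement l'hypoth\`ese que $d$ n'est pas un carr\'e dans $\k[t]$ (et non seulement dans $k(t)$) qui le garantit --- lorsque $d$ est un carr\'e dans $\k[t]$, on peut avoir $\Br X/\Br k=\Z/2$, comme le signale la derni\`ere remarque de l'article. Votre \'etape finale de descente par Faddeev ne traite que les classes d\'ej\`a dans $\Br K$ et passe donc \`a c\^ot\'e de ce point, qui est le coeur de la d\'emonstration. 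L'article contourne enti\`erement la difficult\'e en travaillant sur $\k$ : l'hypoth\`ese donne $\Pic Y_{\k(t)}=\Z$ donc $\Pic X_{\k(t)}=0$, puis $\Pic\ov{X}=0$ (toutes les fibres de $\ov{X}\to\A^{1}_{\k}$ sont int\`egres), $\Br\ov{X}=0$ et $\k[X]^{\times}=\k^{\times}$, et la suite de Hochschild--Serre pour $\Gal(\k/k)$ donne directement $\Br k\oi\Br X$.
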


\begin{proof}
On note $K=\k(t)$.
Comme $Y$ et $Z$ sont lisses, on a 
la suite exacte :
$$ 0 \to \Br \ov{Y}  \to \Br \ov{X} \to H^1_{\et}(\ov{Z},\Q/\Z).$$
(cf. \cite{groth}, III, Cor. 6.2.)

En se restreignant au-dessus de $\Spec K$ et en utilisant le fait
que $Z \to \A^1_{k}$ a pour  fibre g\'en\'erique une conique lisse et a
toutes ses fibres g\'eom\'etriques non vides
et de multiplicit\'e 1, on \'etablit
$$H^1_{\et}(\ov{Z},\Q/\Z)=0.$$

On a ensuite l'inclusion
$ \Br \ov{Y} \hookrightarrow  \Br Y_{K}$. La $K$-vari\'et\'e  $Y_{K}$ est une quadrique projective
et lisse de dimension 2 sur $K=\k(t)$. On conclut
$\Br Y_{K}=0$.
Ainsi  $\Br \ov{Y}=0$ et $ \Br \ov{X}=0$.

Comme le discriminant de la quadrique $Y_{K}$ n'est pas un carr\'e, 
on a $\pic Y_{K}=\Z$ et $\pic X_{K}=0$. Comme les fibres de $\ov{X} \to \A^1_{\k}$
sont int\`egres, on en d\'eduit $\pic \ov{X}=0$.

On v\'erifie facilement que l'on a $\k^{\times}=\k[X]^{\times}$.
Soit $g=\Gal(\k/k)$. De la suite exacte
$$ \pic \ov{X}^g \to H^2(g,\k[X])^{\times}) \to \ker[\Br X \to \Br \ov{X}]    \to H^1(g,\pic \ov{X})$$
on d\'eduit
$$\Br k \oi \Br X.$$
\end{proof}

Le th\'eor\`eme suivant  couvre les principaux \'enonc\'es
du  th\'eor\`eme 6.7 de \cite{CTX2}.

\begin{theo}\label{generalCTX2}
 Soit $k$ un corps de nombres.
 Soient $a_{i}(t)$ et $p(t)$ dans $k[t]$ des polyn\^omes deux \`a deux premiers entre eux.
  Soit $X$ l'ouvert de lissit\'e de la $k$-vari\'et\'e affine d\'efinie
 dans $\A^4_{k}$ par l'\'equation
$$\sum_{i=0}^2 a_{i}(t)x_{i}^2=p(t).$$

Supposons $X(\A_{k}) \neq \emptyset$.
Soit $v_{0}$ une place de $k$  telle que la forme quadratique diagonale
$ <a_{1}(t),  a_{2}(t), a_{3}(t) >$
ait un z\'ero sur $k_{v_{0}}(t)$. Alors~:

\smallskip

(a) Si $v_0$ est complexe,
l'image diagonale de $X(k)$ dans la projection de $X(\A_k)^{\Br X}$
sur $X(\A_k^{v_0})$ est dense.

\smallskip

(b) La place $v_0$ \'etant \`a nouveau suppos\'ee quelconque,
si le produit $p(t).\prod_{i=0}^2a_{i}(t)$ n'est pas un carr\'e dans $\k[t]$,
la $k$-vari\'et\'e $X$ satisfait l'approximation forte hors de $v_{0}$ :
l'image diagonale de $X(k)$ dans  $X(\A_k^{v_0})$
(ad\`eles hors de $v_{0}$) est dense.

 \end{theo}

 \begin{proof}
 Montrons que les hypoth\`eses (i) \`a (iv)  du th\'eor\`eme \ref{thprincipal} sont satisfaites.
 
  La fibre g\'en\'erique de $f : X \to \A^1_{k}$
 est une quadrique affine, espace homog\`ene sous le groupe $G_{t}$ des spineurs
 de la forme quadratique diagonale 
 $ <a_{1}(t), a_{2}(t), a_{3}(t) >$, les stabilisateurs g\'eom\'etriques \'etant
 des tores de dimension 1  (voir \cite[\S 5.6 et \S 5.8]{CTX}). On a donc l'hypoth\`ese (i).
 
 L'hypoth\`ese de coprimalit\'e faite sur les $a_{i}(t)$ et $p(t)$ assure que toutes les fibres de $X \to \A^1_{k}$
 sont  g\'eom\'etriquement int\`egres, et donc
 scind\'ees, ce qui donne (ii).

  L'hypoth\`ese faite sur la place $v_{0}$ implique 
  qu'en tout $t_{0} \in k_{v_{0}}$
  o\`u le produit $p(t).\prod_{i=0}^2a_{i}(t)$ n'est pas nul, la forme quadratique
   $ <a_{1}(t_{0}), a_{2}(t_{0}), a_{3}(t_{0}) >$ est isotrope,
   et ceci garantit que le $k_{v_{0}}$-groupe $G_{t_{0}}$ est isotrope, ce qui donne (iii). 

Pour $v_0$ complexe, la condition (iv) est \'evidente. Pour $v_0$
quelconque mais avec l'hypoth\`ese suppl\'ementaire que
$p(t).\prod_{i=0}^2a_{i}(t)$ n'est pas un carr\'e dans $\k[t]$,
on a $\Br X/\Br k=0$ par la proposition \ref{brauernul}
donc (iv) est encore v\'erifi\'ee.

\smallskip

   On voit donc que les hypoth\`eses (i) \`a (iv) du th\'eor\`eme \ref{thprincipal} sont satisfaites.
   Ceci \'etablit l'\'enonc\'e (a).

   L'\'enonc\'e (b) r\'esulte alors de loc. cit.
   
 \end{proof}

  \begin{rem}\label{witt}
 {\rm 
Si $v_{0}$ est une place r\'eelle,  c'est un th\'eor\`eme de Witt qu'une forme quadratique
de rang au moins 3 sur $k_{v_{0}}(t)$ a un z\'ero si et seulement si pour presque tout
$t_{0} \in k_{v_{0}}$ la forme quadratique sp\'ecialis\'ee a un z\'ero.}
 \end{rem}

\begin{rem}
{\rm 
Dans \cite{CTX2}, on consid\`ere la $k$-vari\'et\'e $X_{0}$ donn\'ee par l'\'equation 
 $$ \sum_{i=0}^2 a_{i}x_{i}^2=p(t),$$
 avec les $a_{i} \in k^{\times}$ et $p(t) \in k[t]$ non nul. Soit $X$ son ouvert de lissit\'e.
 On autorise le polyn\^ome $p(t)$ \`a \^etre un carr\'e dans $\k[t]$.
 Avec les notations ci-dessus, on \'etablit
que  l'image diagonale de $X(k)$ dans la projection de l'ensemble de Brauer--Manin 
$X(\A_k)^{\Br X}$
sur $X(\A_k^{v_0})$ (ad\`eles hors de $v_{0}$) est dense.

Lorsque $p(t)$ est un carr\'e dans $\k[t]$, on peut avoir $\Br X/\Br k=\Z/2$.
Dans \cite[Thm. 6.4, Thm. 6.5]{CTX2}, on donne aussi des \'enonc\'es pour
l'approximation forte sur une r\'esolution non singuli\`ere de $X_{0}$,
c'est-\`a-dire une $k$-vari\'et\'e lisse $Z$ \'equip\'ee d'un $k$-morphisme
propre et birationnel $Z \to X_{0}$. Le th\'eor\`eme principal du pr\'esent article
permet aussi de traiter ce cas quand $v_0$ est complexe, mais pas le cas
o\`u $v_0$ est quelconque.
}
\end{rem}

{\bf Remerciements. } Les auteurs tiennent \`a remercier 
chaleureusement Dasheng Wei
pour leur avoir signal\'e une erreur dans la premi\`ere version de cet
article, et de les avoir aid\'es 
\`a la corriger.

\end{document}